\theoremstyle{plain}
\newtheorem{theorem}{Theorem}[section]
\newtheorem{lemma}[theorem]{Lemma}
\newtheorem{Titletheo}[theorem]{}
\newtheorem{corollary}[theorem]{Corollary}
\theoremstyle{definition}
\newtheorem{remark}[theorem]{Remark}
\numberwithin{equation}{theorem}
\newcommand{\newno}{{\stepcounter{theorem}}{\thetheorem}}
\newcommand{\subhead}[1]{\medskip
 \ni {\bf \newno\; {#1}}}
\renewcommand{\thetheorem}{(\arabic{section}.\arabic{theorem})}
  \newcommand{\om}{\omega}    \newcommand{\Om}{\Omega}
  \newcommand{\sig}{\sigma}   
  \newcommand{\al}{\alpha}
  \newcommand{\del}{\delta}  \newcommand{\Del}{\Delta}
  \newcommand{\gam}{\gamma}   
  \newcommand{\lam}{\lambda}   
  \newcommand{\eps}{\varepsilon}
   \def\part{\partial}
  \newcommand{\wed}{\wedge}
  \newcommand{\bsym}{\boldsymbol}
   \newcommand{\rtsquig}{\rightsquigarrow}
  \def\alv{\alpha^{\vee}}
  \def\avi{\alpha^{\vee}_i}
  \def\b1{\text{\bf\large 1}}  
  \def\simto{\overset{\sim}{\longrightarrow}}
  \def\ip<#1>{\langle#1\rangle}   
  \newcommand{\Imo}{\operatorname{Im}}
  \newcommand{\Ker}{\operatorname{Ker}}
  \newcommand{\minu}{\operatorname{minu}}
\newcommand{\beqn}{\begin{equation}}
\newcommand{\eeqn}{\end{equation}}
\newcommand{\x}{\times}
\renewcommand{\ni}{\noindent}
 \newcommand{\fb}{\mathfrak{b}}
 \newcommand{\fg}{\mathfrak{g}}
 \newcommand{\fh}{\mathfrak{h}}
 \newcommand{\fk}{\mathfrak{k}}
 \newcommand{\fn}{\mathfrak{n}}
 \newcommand{\fp}{\mathfrak{p}}
 \newcommand{\fr}{\mathfrak{r}}
 \newcommand{\fs}{\mathfrak{s}}
 \newcommand{\fu}{\mathfrak{u}}
 \newcommand{\fy}{\mathfrak{y}}
\newcommand{\fri}{\mathfrak{i}}
\newcommand{\bc}{\mathbb{C}}
\newcommand{\bz}{\mathbb{Z}}
 \newcommand{\cb}{\mathcal{B}}
 \newcommand{\cd}{\mathcal{D}}
 \newcommand{\cg}{\mathcal{G}}
 \def\cp{\mathcal{P}}
 \def\cw{\mathcal{W}}
 \def\cx{\mathcal{X}}
  \newcommand{\cy}{\mathcal{Y}}
\begin{document}

  \title{A Generalization of Cachazo-Douglas-Seiberg-Witten Conjecture
for Symmetric Spaces}
  \author{Shrawan Kumar\\Department of Mathematics\\
University of North Carolina\\
Chapel Hill, NC  27599--3250}
  \maketitle

  \section{Introduction}
Let $\fg$ be a (finite-dimensional) semisimple Lie algebra over the
complex numbers $\bc$ and let $\sig$ be an involution (i.e., an
automorphism of order 2) of $\fg$.  Let $\fk$ (resp. $\fp$) be the +1
(resp. $-1$) eigenspace of $\sig$.  Then, $\fk$ is a Lie subalgebra
of $\fg$ and $\fp$ is a $\fk$-module under the adjoint action.  In
this paper we only consider those involutions $\sig$ such that $\fp$
is an irreducible $\fk$-module.

We fix a $\fg$-invariant nondegenerate symmetric bilinear form
$\ip<\,\, , \,\,>$ on $\fg$.  Then, the decomposition
  \[
\fg = \fk\oplus\fp
  \]
is an orthogonal decomposition.

Let $R := \wed (\fp\oplus\fp )$ be the exterior algebra on two
copies of $\fp$.  To distinguish, we denote the first copy of $\fp$
by $\fp_1$ and the second copy by $\fp_2$.  It is bigraded by declaring
$\fp_1$ (resp. $\fp_2$) to have bidegree (1,0) (resp. (0,1)).  Choose any basis $\{
e_i\}$ of $\fp$ and let $\{ f_i\}$ be the dual basis of $\fp$,
i.e.,
  \[
\ip< e_i,f_j> = \del_{i,j} .
  \]

Define a $\fk$-module map (under the adjoint action)
  \[
c_3: \fk \to \fp\otimes \fp , \qquad c_3(x) = \sum_i [x, e_i]\otimes
f_i .
  \]

It is easy to see that $c_3$ does not depend upon the choice of the
basis $\{ e_i\}$.  Projected onto $\wed^2(\fp )$, we get a
$\fk$-module map $\fk\to\wed^2(\fp )$.  This map is denoted by
$c_1$ considered as a map $\fk\to\wed^2(\fp_1)$, and similarly for
$c_2: \fk\to\wed^2(\fp_2)$.  We denote the image of $c_i$ by $C_i$.
Let $J$ be the (bigraded) ideal of $R$ generated by $C_1\oplus
C_2\oplus C_3$ and let us consider the quotient algebra
  \[
A := R/J .
  \]

  The algebra $A$ is a $\fk$-algebra (induced from the adjoint
action of $\fk$) and let $A^{\fk}$ be the subalgebra of
$\fk$-invariants.
The algebra $A^{\fk}$ contains the element $S := \sum e_i\otimes
f_i$ in bidegree (1,1).

{\it The aim of this paper is to understand the structure of the algebra  $A^{\fk}$.}

In the case when $\fg=\mathfrak s\oplus \mathfrak s$ for a simple Lie algebra $\mathfrak s$
and $\sigma$ is the involution which switches the two factors, the study of the structure of
$A^{\fk}$ was initiated  by Cachazo-Douglas-Seiberg-Witten who made the following conjecture.
(Observe that in this case $\fk$ and $\fp$ both can be identified with $\mathfrak s$ and the
adjoint action of  $\fk$ on $\fp$ under this identification is nothing but the adjoint action of
$\mathfrak s$ on itself.) We will refer to this as the {\it diagonal case}.

\begin{Titletheo}{\rm\bf Conjecture [CDSW]} (i) The subalgebra $A^{\fk}$ of
$\fk$-invariants in $A$ is generated, as an algebra, by the element $S$.

(ii) $S^h =0$.

(iii) $S^{h-1}\neq 0$,

\noindent
where $h$ is the dual Coxeter number of $\fk=\fs$.
\end{Titletheo}

They
proved the conjecture for  $\fs=sl_N$ in  [CDSW], and Witten proved it for
$\fs=sp_N$ in [W].  He also proved parts (i) and (ii) of the conjecture for
$\fs=so_N$ in [W]. Subsequently,
Etingof-Kac proved the conjecture for $\fs$ of type $G_2$ by using the
theory of abelian ideals. Kumar proved part (i) of the conjecture
uniformly in [K3] using geometric and topological methods.

Returning to the general case of any involution $\sigma$, we prove the following analogous result
(cf. Theorem 4.8) which is the main result of this paper.

\begin{Titletheo} {\rm\bf Theorem} Let $\sigma$ be any involution of a simple Lie algebra $\fg$ such that
$\fp$ is an irreducible module under the adjoint action of $\fk$. Then, the subalgebra $A^{\fk}$ of
$\fk$-invariants in $A$ is generated, as an algebra, by the element $S$.
\end{Titletheo}

Analogous to our proof in the diagonal case, we need to consider the algebra
$B:=R/\ip<C_1\oplus C_2>$.
We show (cf. Theorem 3.1) that the subalgebra $B^\fk$ of  $\fk$-invariants of $B$
is graded isomorphic with the singular cohomology with complex coefficients $H^*(\cy)$ of a
certain finite-dimensional projective subvariety $\cy$ of the twisted affine Grassmannian
$\cx_{\sig} $ (cf. Section 2 for the definitions of $\cx_{\sig} $ and $\cy$). The definition
of the subvariety $\cy$ is motivated from the theory of abelian subspaces
of $\fp$. The main ingredients in our proof of Theorem 3.1 are: result of Garland-Lepowsky on
the Lie algebra cohomology of the nil-radical $\hat{\fu}_{\sig}$ of a maximal parabolic subalgebra of twisted
affine Kac-Moody Lie algebras;
the `diagonal'
cohomology of $\hat{\fu}_{\sig}$ introduced by Kostant; certain results of Han and
Cellini-Frajria-Papi on abelian subspaces of $\fp$
and a certain deformation of the singular cohomology of  $\cx_{\sig} $ introduced by
Belkale-Kumar.

Having identified the algebra $B^\fk$ with $H^*(\cy)$, we next use the fact that
$H^*(\cx_{\sig}) $ surjects onto $H^*(\cy)$ under the restriction map. Section 4 is devoted to
 study the cohomology algebra $H^*(\cx_{\sig}) $. The results here are more involved than
 in the diagonal case. One mazor difficulty arises from the fact that the fibration
 \[
\Om^{\sig}_1(G_o) \to \Om^{\sig}(G_o)/K_o \overset{\gamma}\longrightarrow G_o/K_o ,
  \]
  is nontrivial (cf. Section 4 for various notation). To complete the proof of our
  Theorem 4.8,
  we show that all but one of the generators of $H^*(\cx_{\sig}) $ go to zero
  under the canonical
  projection map $B^\fk \to A^\fk$ and the remaining one generator goes to $S$.

 Finally, analogous to the Cachazo-Douglas-Seiberg-Witten Conjecture, we make
the following conjecture.

\begin{Titletheo}{\rm\bf Conjecture}
$S^h=0$ and $S^{h-1}\neq 0$ in $A^{\fk}$, where $h=h_{\fg}
-h_{\fk}$ ($h_{\fg}$ being the dual Coxeter number of $\fg$).
\end{Titletheo}

Unless otherwise stated, by the cohomology $H^*(X)$ of a topological
space $X$ we mean the singular cohomology $H^*(X,\bc )$ with complex
coefficients.

\vskip2ex

\noindent
{\it Acknowledgements.} It is my pleasure to thank Weiqiang Wang for asking the
question answered in this paper. I also thank Pierluigi Frajria and Paolo Papi for some helpful
correspondences.
In particular, the proof of Lemma 2.4 is due to them.
This work was partially supported by the FRG grant no DMS-0554247 from NSF.

\section{Preliminaries and Notation}

  \subhead{Twisted affine Lie algebras.}
Let $\fg$ be a (finite-dimensional) simple Lie algebra over $\bc$
and let $\sig$ be an involution of $\fg$.  Let $\fk\subset\fg$ be
the $+1$ eigenspace of $\sig$ (which is a reductive subalgebra of
$\fg$) and let $\fp$ be the $-1$ eigenspace of $\sig$, which is a
$\fk$-module under the adjoint action.  As in the introduction, we
only consider those involutions $\sig$ such that $\fp$ is an
irreducible $\fk$-module.  {\em This will be our tacit assumption on
$\sig$ throughout the paper.}

Fix a Cartan subalgebra $\fh_{\sig}$ and a Borel subalgebra
$\fb_{\sig} \supset \fh_{\sig}$ of $\fk$.  Let $\fn_{\sig}$ be the
nil-radical of $\fb_{\sig}$.  Associated to the pair $(\fg ,\sig )$
we have the {\em twisted affine Kac-Moody Lie algebra}
  \[
\hat{\fg}_{\sig} := \sum_{i\in\bz} \fg_i\otimes t^i\oplus \bc
c\oplus \bc d,
  \]
where $\fg_{2i} :=\fk$ and $\fg_{2i+1} :=\fp$ for any $i\in\bz$.
The bracket in $\hat{\fg}_{\sig}$ is defined as follows:
  \begin{gather*}
\bigl[ x\otimes t^m +\lam c+\mu d, x'\otimes t^{m'} +\lam' c+\mu'
d\bigr] =\\
\bigl( [x,x']\otimes t^{m+m'} + \mu m' x'\otimes t^{m'} -\mu'
mx\otimes t^m\bigr) +m\, \del_{m,-m'} \ip<x,x'>c,
  \end{gather*}
where $\ip<\,\, ,\,\,>$ is the normalized $\fg$-invariant bilinear
form on $\fg$ as in the introduction.

The Lie algebra $\hat{\fg}_{\sig} $ is a subalgebra of the affine Kac-Moody algebra
\[\hat{\fg}:=\sum_{i\in\bz} \fg\otimes t^i\oplus \bc
c\oplus \bc d\]
 with the bracket defined by the same formula as above.

We define the following subalgebras of $\hat{\fg}_{\sig}$ called the
{\em standard Cartan}, {\em standard Borel} and the {\em standard
maximal parabolic subalgebra} respectively:
  \begin{align*}
\hat{\fh}_{\sig} &:= \fh_{\sig}\otimes t^0\oplus\bc c\oplus\bc d,\\
\hat{\fb}_{\sig} &:= \fb_{\sig}\otimes t^0\oplus \sum_{i>0}
\fg_i\otimes t^i  \oplus\bc c\oplus\bc d, \text{ and}\\
\hat{\fp}_{\sig} &:= \sum_{i\geq 0} \fg_i\otimes t^i \oplus \bc
c\oplus\bc d.
  \end{align*}

We also have the {\em nil-radicals} $\hat{\fn}_{\sig}$ of
$\hat{\fb}_{\sig}$ and $\hat{\fu}_{\sig}$ of $\hat{\fp}_{\sig}$ and
the Levi subalgebra $\hat{\fr}_{\sig}$ of $\hat{\fp}_{\sig}$
defined as follows:
  \begin{align*}
\hat{\fn}_{\sig} &:= \fn_{\sig}\otimes t^0\oplus \sum_{i>0}
\fg_i\otimes t^i, \\
\hat{\fu}_{\sig} &:= \sum_{i>0} \fg_i\otimes t^i, \text{ and}\\
\hat{\fr}_{\sig} &:= \fk\otimes t^0\oplus\bc c\oplus\bc d.
  \end{align*}

The evaluation at 1 gives rise to a Lie algebra homomorphism
  \[
ev_1: \hat{\fg}_{\sig} \to \fg\oplus\bc c\oplus\bc d,
  \]
where $c$ and $d$ are central in the right side.

Associated to the twisted affine Kac-Moody Lie algebra
$\hat{\fg}_{\sig}$ and its subalgebras $\hat{\fp}_{\sig}$ and
$\hat{\fb}_{\sig}$, we have the twisted affine Kac-Moody group
$\cg_{\sig}$, the standard maximal parabolic subgroup $\cp_{\sig}$
and the standard Borel subgroup $\cb_{\sig}$ respectively (cf. [K2,
Chapter 6]).

Let $W_{\sig}$ be the (finite) Weyl group of $(\fk ,\fh_{\sig})$ and
let $\cw_{\sig}$ be the (affine) Weyl group of $\bigl(
\hat{\fg}_{\sig}, \hat{\fh}_{\sig}\bigr)$.  Let $\hat{\Del}^+_{\sig}
\subset (\hat{\fh}_{\sig})^*$ be the set of positive roots of
$\hat{\fg}_{\sig}$, i.e., the set of roots for the subalgebra
$\hat{\fn}_{\sig}$ with respect to the adjoint action of
$\hat{\fh}_{\sig}$.  We set $\hat{\Del}^-_{\sig} =
-\hat{\Del}^+_{\sig}$.  For any $w\in\cw_{\sig}$, define
  \begin{align*}
\Phi (w) &:= \hat{\Del}^+_{\sig} \cap w\ \hat{\Del}^-_{\sig} , \text{
and}\\
\hat{\fn}_{\sig}(w) &:= \bigoplus_{\al\in\Phi (w)}
\bigl(\hat{\fg}_{\sig}\bigr)_{\al},
  \end{align*}
where $\bigl(\hat{\fg}_{\sig}\bigr)_{\al}$ denotes the root space of
$\hat{\fg}_{\sig}$ corresponding to the root $\al$.  Since each root
in $\Phi (w)$ is real, $\bigl(\hat{\fg}_{\sig}\bigr)_{\al}$ is
one-dimensional for each $\al\in\Phi (w)$.

  \subhead{Abelian subspaces of $\fp$.}
Let $\cw'_{\sig}\subset\cw_{\sig}$ be the set of minimal coset
representatives in the cosets $\cw_{\sig}/W_{\sig}$.

Following [CFP], we call an element $w\in\cw_{\sig}$ {\em minuscule}
if
  \[
\hat{\fn}_{\sig}(w^{-1}) \subset \fp\otimes t.
  \]

Let us denote the set of minuscule elements in $\cw_{\sig}$ by
$\cw^{\minu}_{\sig}$.  Then, it is easy to see that
$\cw^{\minu}_{\sig} \subset \cw'_{\sig}$ and, clearly, it is a
finite set.

We recall the following result from [CFP, Theorem 3.1].

  \begin{theorem}  There is a bijection between $\cw^{\minu}_{\sig}$
and the set $\Xi$ of $\fb_{\sig}$-stable abelian subspaces of $\fp$
given by $w\mapsto ev_1(\hat{\fn}_{\sig}(w^{-1}))$.  In particular,
the cardinality $|\cw^{\minu}_{\sig}| = |\Xi |$.
  \end{theorem}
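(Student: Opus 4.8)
The strategy is to read both sides off the combinatorics of the twisted affine root system, via the standard dictionary between elements of $\cw_\sig$ and biconvex subsets of $\hat\Delta^+_\sig$. Recall that a finite subset $T\subseteq\hat\Delta^+_\sig$ is \emph{biconvex} if $T$ and its complement in $\hat\Delta^+_\sig$ are both closed under addition of roots, and that $w\mapsto\Phi(w^{-1})$ is a bijection of $\cw_\sig$ onto the set of finite biconvex subsets of $\hat\Delta^+_\sig$ (with $|\Phi(w^{-1})|=\ell(w)$); in particular $\hat\fn_\sig(w^{-1})=\bigoplus_{\al\in\Phi(w^{-1})}(\hat\fg_\sig)_\al$ is always a finite-dimensional nilpotent Lie subalgebra of $\hat\fg_\sig$. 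Two structural remarks about the slice $\fp\otimes t=\fg_1\otimes t$ are used repeatedly: (a) $[\fp\otimes t,\fp\otimes t]\subseteq(\fk\otimes t^2)\oplus\bc c$, which meets $\fp\otimes t$ in $0$, so \emph{any} Lie subalgebra of $\hat\fg_\sig$ contained in $\fp\otimes t$ is abelian; and (b) $ev_1$ restricts to a $(\fk\otimes t^0)$-equivariant linear isomorphism $\fp\otimes t\;\simto\;\fp$, under which a root space $(\hat\fg_\sig)_\al\subseteq\fp\otimes t$ with $\al$ \emph{real} maps onto a one-dimensional weight space $\fp_\mu$ of $\fh_\sig$ with $\mu\neq 0$ (such an $\al$ is real precisely because $\al|_{\fh_\sig}=\mu\neq 0$, so $\al$ is not proportional to the null root), while $\fp_0\otimes t$ is an imaginary root space.

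First I would check that the map $w\mapsto\mathfrak a_w:=ev_1(\hat\fn_\sig(w^{-1}))$ is well defined and injective. If $w\in\cw^{\minu}_\sig$, then $\hat\fn_\sig(w^{-1})$ is a Lie subalgebra contained in $\fp\otimes t$, hence abelian by (a); since inversion sets consist of real roots, (b) gives $\mathfrak a_w=\bigoplus_{\mu\in M_w}\fp_\mu$, an abelian subspace of $\fp$ with $M_w$ a set of nonzero weights, and shows $\hat\fn_\sig(w^{-1})$ is recovered from $\mathfrak a_w$; together with the injectivity of $w\mapsto\Phi(w^{-1})$ this yields injectivity of $w\mapsto\mathfrak a_w$. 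For the $\fb_\sig$-stability of $\mathfrak a_w$ it suffices, by the equivariance in (b), to show $\hat\fn_\sig(w^{-1})$ is stable under $\operatorname{ad}(\fn_\sig\otimes t^0)$, i.e.\ that $\Phi(w^{-1})$ is closed under adding a positive root of $\fk$ whenever the result is a root. This is where biconvexity enters: every positive root of $\fk$ lies in $\fk\otimes t^0$, hence outside $\Phi(w^{-1})\subseteq\Delta(\fp\otimes t)$ (different $t$-degree), and since $\cw^{\minu}_\sig\subseteq\cw'_\sig$, $w$ sends positive roots of $\fk$ to positive roots. Given $\beta\in\Phi(w^{-1})$ and a simple root $\al_i$ of $\fk$ with $\beta+\al_i$ a root, one argues — running along the $\al_i$-root string through $\beta$ and using the co-closedness of $\hat\Delta^+_\sig\setminus\Phi(w^{-1})$ — that $\beta+\al_i\in\Phi(w^{-1})$; the non-simple case follows by induction on the height of the root in $\fk$.

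For surjectivity, take $\mathfrak a\in\Xi$. Being $\fh_\sig$-stable, $\mathfrak a=\bigoplus_\mu(\mathfrak a\cap\fp_\mu)$; one checks $\mathfrak a\cap\fp_0=0$ (this is the point that genuinely uses the irreducibility of $\fp$), and since $\fp_\mu$ is one-dimensional for $\mu\neq 0$ by (b), $\mathfrak a=\bigoplus_{\mu\in M}\fp_\mu$ for a set $M$ of nonzero weights. Let $T\subseteq\Delta(\fp\otimes t)$ be the corresponding finite set of positive real roots. One then verifies that $T$ is biconvex: \emph{closedness} of $T$ is forced by the abelian-ness of $\mathfrak a$ (a sum of two elements of $T$ lives in $t$-degree $2$, and is a root only when the bracket $[\fp_{\mu_1},\fp_{\mu_2}]$, or a central term, is nonzero — which abelian-ness excludes, once one has checked, using the explicit shape of the root system, that no doubled-root or central phenomenon intervenes), while \emph{co-closedness} of the complement of $T$ is the $\fb_\sig$-stability of $\mathfrak a$ re-expressed on the root side, by the same $t$-degree bookkeeping as above. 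Hence $T=\Phi(w^{-1})$ for a unique $w\in\cw_\sig$; since $T\subseteq\Delta(\fp\otimes t)$ this $w$ is minuscule, and $ev_1(\hat\fn_\sig(w^{-1}))=\bigoplus_{\mu\in M}\fp_\mu=\mathfrak a$. Combined with the previous step, $w\mapsto\mathfrak a_w$ is a bijection $\cw^{\minu}_\sig\to\Xi$, whence $|\cw^{\minu}_\sig|=|\Xi|$.

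The real work is the two-way dictionary "$\Phi(w^{-1})$ biconvex $\iff$ $\mathfrak a_w$ abelian and $\fb_\sig$-stable": closedness must be matched exactly with abelian-ness and co-closedness exactly with Borel-stability, and — unlike in the untwisted (diagonal) situation, which is the classical Kostant–Peterson theorem on abelian ideals that this generalizes — one must track the finer structure of the twisted affine root systems, in particular the non-reduced type $A_{2n}^{(2)}$, where roots $\al$ with $2\al$ also a root occur inside $\fp\otimes t$ and could a priori disrupt the closedness bookkeeping. Secondarily, the vanishing $\mathfrak a\cap\fp_0=0$ and the root-string reduction underlying $\fb_\sig$-stability each require a little care.
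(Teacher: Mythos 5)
First, a point of comparison: the paper does not prove this statement at all --- Theorem 2.3 is quoted verbatim from [CFP, Theorem 3.1] --- so the only thing to measure your sketch against is that reference, whose combinatorial strategy (encode a candidate subspace by its set of degree-one roots and match such sets with inversion sets of $\cw_{\sig}$) your outline does resemble. But as written your proposal has genuine gaps, and they sit exactly at the places you defer. The central one is the asserted dictionary ``closedness of $T$ $\iff$ abelian-ness, co-closedness $\iff$ $\fb_{\sig}$-stability.'' This is not correct as stated, because $\hat{\Del}^+_{\sig}$ contains positive \emph{imaginary} roots which your bookkeeping ignores: whenever $\operatorname{rank}\fk<\operatorname{rank}\fg$ the zero-weight space $\fp_0$ is nonzero, so there is an imaginary root in $t$-degree $1$ lying in the complement of $T$, and closedness of the complement then imposes extra conditions (for instance that no weight occurring in $\mathfrak{a}$ is a root of $\fk$, with the appropriate sign) which are \emph{not} a re-expression of Borel-stability and must be deduced from ``abelian $+$ stable $+$ $\mathfrak{a}\cap\fp_0=0$'' by an $\mathfrak{sl}_2$-string argument. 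Likewise, closedness of $T$ must exclude a pair of opposite weights $\pm\mu$, whose sum is the degree-two imaginary root; here ``abelian-ness excludes it'' only after one proves $[\fp_\mu,\fp_{-\mu}]\neq 0$, which needs the invariant form, while your worry about a ``central term'' is vacuous (the bracket of two degree-one elements has no central component). The remaining facts you lean on --- nonvanishing of $[(\hat\fg_{\sig})_{\beta_1},(\hat\fg_{\sig})_{\beta_2}]$ when $\beta_1\neq\beta_2$ are degree-one roots with $\beta_1+\beta_2$ a root, the absence of a doubled-root phenomenon, and $\mathfrak{a}\cap\fp_0=0$ (which genuinely needs irreducibility, e.g.\ via the observation that a nonzero $x\in\mathfrak{a}\cap\fp_0$ is semisimple and is forced to be an $\fn_{\sig}$-singular vector of weight $0$) --- are all asserted rather than proved.

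Second, your Borel-stability step is aimed at a statement that is false in the paper's own normalization. With $\Phi(w^{-1})=\hat{\Del}^+_{\sig}\cap w^{-1}\hat{\Del}^-_{\sig}$ and $\cw'_{\sig}$ the minimal representatives of $\cw_{\sig}/W_{\sig}$ (so $w$ sends positive roots of $\fk$ to positive roots), the inversion set of a minuscule element is closed under \emph{subtracting} positive roots of $\fk$, not under adding them: already for the simple reflection $w=s_{\al_0}$ (which is minuscule) one has $\Phi(w^{-1})=\{\al_0\}$, while $\al_0+\al_i$ is a root for any $\fk$-simple root $\al_i$ adjacent to $\al_0$ and does not lie in $\Phi(w^{-1})$; correspondingly $ev_1$ of that root space is the lowest-weight line of $\fp$. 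So the root-string argument you propose cannot establish what you want it to; once the conventions are straightened out, the correct stability statement follows from a two-line computation (if $\beta\in\Phi(w^{-1})$ and $\beta\mp\al$ were a positive root outside $\Phi(w^{-1})$, then $w\beta$ would be a sum of two positive roots), with no root strings needed. In short, the skeleton of your approach is the right one and is essentially that of [CFP], but the steps where the twisted, unequal-rank features actually enter --- $\fp_0$, the imaginary roots, and the nonvanishing of the relevant brackets --- are precisely the ones left unproved or misstated, so the proposal does not yet constitute a proof.
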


We recall the Bruhat decomposition (cf. [K2, Corollary 6.1.20]) of
the projective ind-variety
  \[
\cx_{\sig} := \cg_{\sig}/\cp_{\sig} = \bigsqcup_{w\in\cw'_{\sig}}
\cb_{\sig}w \cp_{\sig}/\cp_{\sig} ,
  \]
where the {\em Bruhat cell} $C(w) :=
\cb_{\sig}w\cp_{\sig}/\cp_{\sig}$ is isomorphic to the affine space
$\bc^{\ell (w)}$ ($\ell (w)$ being the length of $w$ in the Coxeter
group $\cw_{\sig}$).  Moreover, for any $w\in\cw'_{\sig}$, the
Zariski closure
  \[
\overline{C(w)} = \bigsqcup_{\substack{ v\in\cw'_{\sig}\text{ and}\\
v\leq w\qquad}} C(v).
  \]

Define a subset $\cy$ of $\cg_{\sig}/\cp_{\sig}$ by
  \[
\cy = \bigsqcup_{w\in\cw^{\minu}_{\sig}} C(w).
  \]

Then, $\cy$ is a (finite-dimensional) projective subvariety of
$\cg_{\sig}/\cp_{\sig}$.  This follows from the following.

  \begin{lemma}  For $w\in\cw^{\minu}_{\sig}$ and any
$u\in\cw'_{\sig}$ such that $u\leq w$, we have
$u\in\cw^{\minu}_{\sig}$.
  \end{lemma}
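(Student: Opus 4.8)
The plan is to reduce the statement to a single Bruhat covering relation and then to ``peel off'' the affine simple reflection $s_0$, controlling the effect on the inversion set through its convexity together with the minuscule hypothesis. For a positive root $\beta$ of $\hat\fg_\sig$ write $\deg\beta\in\bz_{\geq 0}$ for its $t$-degree, so that $(\hat\fg_\sig)_\beta\subseteq\fg_i\otimes t^i$ with $i=\deg\beta$; the positive roots with $\deg=0$ are exactly the roots of $\fk$, and $\fp\otimes t$ is spanned by the root spaces of $\deg=1$. If $v\in\cw'_\sig$, minimality of $v$ in $vW_\sig$ gives that $v$ carries the positive roots of $\fk$ into $\hat\Del^+_\sig$, so $\Phi(v^{-1})$ contains no root of $\deg 0$; hence for $v\in\cw'_\sig$ one has $v\in\cw^{\minu}_\sig$ if and only if $\Phi(v^{-1})$ contains no root of $\deg\geq 2$. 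Note that $\{\beta\in\hat\Del^+_\sig:\deg\beta\geq 2\}$ is precisely the set of roots of $[\hat\fu_\sig,\hat\fu_\sig]=\bigoplus_{i\geq 2}\fg_i\otimes t^i$, an ideal of $\hat\fb_\sig$.

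Since the Bruhat order on $\cw'_\sig$ is graded by the length function with covering relations of length difference one (standard for Coxeter-group quotients; it mirrors the cell structure of $\cg_\sig/\cp_\sig$), it suffices to treat a cover $u\lessdot w$ in $\cw'_\sig$, $\ell(w)=\ell(u)+1$, with $w\in\cw^{\minu}_\sig$, and to show that $\Phi(u^{-1})$ has no root of $\deg\geq 2$. Because $w\in\cw'_\sig\setminus\{e\}$, its only right descent is $s_0$; write $w=w's_0$, so $\alpha_0\in\Phi(w^{-1})$. By the lifting property applied to $u<w$ and $ws_0<w$, either $s_0$ is not a right descent of $u$, or it is. In the first case $u\leq ws_0$, and comparing lengths forces $u=ws_0$, whence $\Phi(u^{-1})=s_0\bigl(\Phi(w^{-1})\setminus\{\alpha_0\}\bigr)$. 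For $\beta\in\Phi(w^{-1})\setminus\{\alpha_0\}$ the root $s_0\beta\in\hat\Del^+_\sig$ has degree $1-\langle\beta,\alpha^\vee_0\rangle$, and $\langle\beta,\alpha^\vee_0\rangle$ cannot be negative: otherwise $\beta+\alpha_0$ would be a root, and, $\Phi(w^{-1})$ being closed under addition of roots and containing $\beta$ and $\alpha_0$, it would contain the degree-$2$ root $\beta+\alpha_0$, contradicting minuscularity of $w$. Hence every $s_0\beta$ has $\deg\leq 1$, so $u\in\cw^{\minu}_\sig$. In the second case one peels $s_0$ from both $u$ and $w$, reducing to the induced cover $us_0\lessdot ws_0$ and to the identity $\Phi(u^{-1})=\{\alpha_0\}\cup s_0\Phi((us_0)^{-1})$; the same sign constraint $\langle\,\cdot\,,\alpha^\vee_0\rangle\geq 0$, forced by convexity of inversion sets together with the minuscule hypothesis, again prevents a root of $\deg\geq 2$ from entering $\Phi(u^{-1})$, although one must track carefully whether the auxiliary elements $us_0,\,ws_0$ remain in $\cw'_\sig$ and, when they do not, use only the weaker consequence that their inverse inversion sets contain no root of $\deg 0$.

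The main obstacle is exactly this last bookkeeping. Inversion sets are not monotone along the Bruhat order — $\Phi(u^{-1})\not\subseteq\Phi(w^{-1})$ can occur even for $u\lessdot w$ — so descending to smaller elements genuinely requires the two structural inputs used above: that $\Phi(v^{-1})$ is closed under addition of roots, and that the degree-$\geq 2$ roots constitute the derived ideal $[\hat\fu_\sig,\hat\fu_\sig]$ of the nilradical; it is the interaction of these with the condition $u\in\cw'_\sig$ that closes the argument. (An alternative, more representation-theoretic route would transport the statement, via the bijection of Theorem~2.3, into the family of $\fb_\sig$-stable abelian subspaces of $\fp$ and argue there; but the combinatorial argument above appears to be the most direct.)
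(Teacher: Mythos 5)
Your dictionary (for $v\in\cw'_{\sig}$: $v\in\cw^{\minu}_{\sig}$ iff $\Phi(v^{-1})$ contains no root of $t$-degree $\geq 2$), your reduction to covers inside $\cw'_{\sig}$ (the same reduction the paper makes via shellability of the quotient order), and your Case~1 computation are all correct. But note what Case~1 actually covers: since $u\in\cw'_{\sig}$ has every $s_i$, $i\geq 1$, as a right ascent, the assumption that $s_0$ is not a right descent of $u$ forces $u$ to have no right descents at all, i.e.\ $u=e$ (and then $w=s_0$), where the lemma is vacuous. So the entire content of the statement lies in your Case~2 --- the case that holds for \emph{every} cover $u\lessdot w$ with $u\neq e$ --- and that case is only sketched, and the sketch does not close. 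First, the proposed bookkeeping is stated backwards: when $us_0$ or $ws_0$ leaves $\cw'_{\sig}$, its inverse inversion set \emph{does} contain roots of degree $0$; and in any case what you need is control of the degree-$\geq 2$ roots of $\Phi\bigl((us_0)^{-1}\bigr)$, which, as you yourself point out, cannot be deduced from $\Phi\bigl((ws_0)^{-1}\bigr)$ by any monotonicity along Bruhat order, nor by applying the lemma inductively to the cover $us_0\lessdot ws_0$, since $ws_0$ is in general neither minuscule nor in $\cw'_{\sig}$.

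Second, even if one grants (say via a strengthened induction) that $\Phi\bigl((us_0)^{-1}\bigr)$ has no root of degree $\geq 2$, the transfer back through $\Phi(u^{-1})=\{\alpha_0\}\sqcup s_0\,\Phi\bigl((us_0)^{-1}\bigr)$ is not automatic: a root $\gamma\in\Phi\bigl((us_0)^{-1}\bigr)$ of degree $1$ with $\langle\gamma,\alpha_0^{\vee}\rangle=-1$ is sent to the degree-$2$ root $\gamma+\alpha_0\in\Phi(u^{-1})$, and nothing in your framework excludes this: the convexity-plus-minuscularity argument you used in Case~1 needed both $\alpha_0$ and $\gamma$ to lie in the inversion set of a \emph{minuscule} element, whereas here $\alpha_0\notin\Phi\bigl((us_0)^{-1}\bigr)$ and $u$ is precisely the element whose minuscularity is in question. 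Excluding this configuration requires comparing $u$ directly with $w$, i.e.\ writing the cover as $w=us_{\al}$ for a (generally non-simple) positive real root $\al$, noting $\al\in\Phi(w^{-1})$ so $\al(d)=1$, and showing $u\beta=w(s_{\al}\beta)\in\hat{\Del}^+_{\sig}$ for every $\beta$ with $\beta(d)>1$ by a case analysis on the sign and degree of $s_{\al}\beta$ --- which is exactly the paper's proof and is the ingredient your $s_0$-peeling strategy is missing.
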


  \begin{proof}[Proof {\em (due to P. Frajria and P. Papi)}]
  By the definition,
an element $u\in\cw_{\sig}$ is minuscule iff $\beta (d)=1$ for all
$\beta\in\Phi (u^{-1})$.  By the $L$-shellability of the Bruhat
order in $\cw'_{\sig}$, we can assume that $w=us_{\al}$, where
$\al\in\hat{\Del}^+_{\sig}$ is a real root and $s_{\al}$ is the
reflection through $\al$: $s_{\al}\lam = \lam -\ip<\lam ,\alv> \al$
for $\lam\in (\hat{\fh}_{\sig})^*$.  Since $u<w$, we have
$w\al\in\hat{\Del}^-_{\sig}$, and hence $\al\in\Phi (w^{-1})$.  In
particular, $\al (d)=1$.  Since $u\in\cw'_{\sig}$, we have $\beta
(d)\neq 0$ for any $\beta\in\Phi (u^{-1})$.  Thus, it suffices to prove that
 for any $\beta\in
\hat{\Del}^+_{\sig}$ such that $\beta (d)>1$, we have
$u\beta\in\hat{\Del}^+_{\sig}$.  Observe that since $\beta (d)>1$,
$w\beta\in\hat{\Del}^+_{\sig}$.

There are three cases to consider:
  \begin{description}
\item[{\em Case I}:]  $s_{\al}\beta\in\hat{\Del}^-_{\sig}$.\\
In this case, $\ip<\beta ,\alv> >0$.  Thus,
  \[
u\beta = w(s_{\al}\beta ) = w(\beta -\ip<\beta ,\alv> \al) = w\beta
-\ip<\beta ,\alv> w\al\in\hat{\Del}^+_{\sig} ,
  \]
  since $w\al\in \hat{\Del}^-_{\sig}$.
\item[{\em Case II}:] $s_{\al}\beta\in\hat{\Del}^+_{\sig}$ and
$s_{\al}\beta (d)\neq 1$.\\
In this case, $s_{\al}\beta\notin\Phi (w^{-1})$, i.e., $u\beta =
ws_{\al}\beta \in\hat{\Del}^+_{\sig}$.
\item[{\em Case III}:] $s_{\al}\beta\in\hat{\Del}^+_{\sig}$ and
$s_{\al}\beta (d)=1$.\\
In this case,
  \begin{align*}
  s_{\al}\beta (d) &= \beta (d) -\ip<\beta ,\alv>\, \al (d)\\
  &= \beta (d) -\ip<\beta ,\alv> =1,\,\,\text{since}\, \al (d)=1.
  \end{align*}

  \end{description}
Thus, $\ip<\beta ,\alv> = \beta (d)-1 > 0$ (since $\beta (d)>1$) and
hence $u\beta =ws_{\al}\beta = w\beta -\ip<\beta ,\alv >
(w\al )\in\hat{\Del}^+_{\sig}$, since $w\al\in\hat{\Del}^-_{\sig}$.
This proves the lemma.
  \end{proof}

 \section{Topological identification of the algebra $B^{\bsym{\fk}}$}

Consider the $\bz_+$-graded $\fk$-algebra
  \[
B := \frac{\wed (\fp )\otimes \wed (\fp )}{\ip< C_1\oplus C_2>},
  \]
where $C_1$ and $C_2$ are defined in the Introduction.

Following is the first main result of this paper.

  \begin{theorem}  The singular cohomology $H^*(\cy ,\bc )$ of $\cy$ with complex coefficients is isomorphic as a $\bz_+$-graded algebra with the graded
algebra of $\fk$-invariants $B^{\fk}$.
  \end{theorem}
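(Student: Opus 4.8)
The plan is to compute both sides via Lie algebra cohomology of the nil-radical $\hat\fu_\sig$ and match them. First I would recall that, since $\cy$ is a union of Bruhat cells indexed by $\cw^{\minu}_\sig$ (and this set is closed under going down in the Bruhat order by Lemma 2.4), the cells give a CW-type paving in even real dimensions, so $H^*(\cy,\bc)$ has a basis of Schubert classes $\{\epsilon^w : w\in\cw^{\minu}_\sig\}$, with $\epsilon^w$ in degree $2\ell(w)$. On the algebra side, $\wed(\fp)\otimes\wed(\fp)$ is naturally the Lie algebra cochain complex generators for $\hat\fu_\sig=\sum_{i>0}\fg_i\otimes t^i$ truncated in the first two graded pieces $\fp\otimes t\oplus\fk\otimes t^2\oplus\cdots$; more precisely I would exhibit $B$ as (a piece of) the cohomology $H^*(\hat\fu_\sig)$, using that the Garland–Lepowsky theorem identifies $H^*(\hat\fu_\sig)$ with a direct sum of irreducible $\hat\fr_\sig$-modules indexed by $\cw'_\sig$, the $w$-th summand sitting in degree $\ell(w)$. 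The relations $C_1,C_2$ are exactly the Koszul-type relations cutting the free exterior algebra down to the abelian (i.e. minuscule) part, and here is where the Han and Cellini–Frajria–Papi results on $\fb_\sig$-stable abelian subspaces of $\fp$ enter: they guarantee that the only $w\in\cw'_\sig$ surviving after imposing $\langle C_1\oplus C_2\rangle$ are precisely the minuscule ones (for one copy of $\fp$ one gets the abelian subspaces, for two copies one gets pairs, and $\fk$-invariance picks out a single line per pair).

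Next I would take $\fk$-invariants on both sides. On $H^*(\hat\fu_\sig)$, Kostant's "diagonal" cohomology construction shows that the $\fk$-invariant part of the relevant bigraded piece is spanned by one class for each $w$ in the surviving index set — this is the step that converts the Garland–Lepowsky $\hat\fr_\sig$-module decomposition into a clean statement about multiplicities of the trivial $\fk$-module. Combining with the previous paragraph, $B^\fk$ has a homogeneous basis indexed by $\cw^{\minu}_\sig$, with the basis element coming from $w$ living in degree $2\ell(w)$ (the doubling coming from the two copies $\fp_1,\fp_2$, each contributing $\ell(w)$). This already matches $H^*(\cy)$ as graded vector spaces via Theorem 2.3 (the bijection $\cw^{\minu}_\sig\leftrightarrow\Xi$) together with the cell decomposition of $\cy$.

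The genuinely hard part is promoting this degree-by-degree vector space isomorphism to an \emph{algebra} isomorphism: the cup product on $H^*(\cy)$ must be matched with the multiplication in $B^\fk$ induced by wedge product. Here I would use the Belkale–Kumar deformation of the cup product on $H^*(\cx_\sig)$: the point is that for Schubert classes supported on the minuscule locus, structure constants in the ordinary product that "leave" the minuscule stratum are forced to vanish (a codimension/dimension count, exactly the kind controlled by the $\fb_\sig$-stable abelian subspace combinatorics), so the restriction $H^*(\cx_\sig)\twoheadrightarrow H^*(\cy)$ factors through the Belkale–Kumar deformed product, which in turn has a purely Lie-algebra-cohomological description matching the wedge multiplication on $B$. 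Concretely, I would (a) write down the explicit $\fk$-module maps realizing $B\to H^*(\cy)$ on generators, (b) check it kills $\langle C_1\oplus C_2\rangle$ using that $C_i$ maps into the ideal cut out by non-abelian roots, (c) verify multiplicativity on Schubert basis elements by comparing Littlewood–Richardson-type coefficients with Garland–Lepowsky branching, and (d) conclude bijectivity from the equality of graded dimensions established above. I expect step (c) — matching the two sets of structure constants — to be where essentially all the work lies, and where the Belkale–Kumar machinery is indispensable.
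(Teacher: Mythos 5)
Your overall route is the paper's route: identify the two copies of $\wed(\fp)/\langle C_1\rangle$, $\wed(\fp)/\langle C_2\rangle$ with Kostant's diagonal cohomologies $H^*_D(\hat{\fu}_{\sig})$ and $H^*_D(\hat{\fu}^-_{\sig})$, decompose them by Garland--Lepowsky, use Han/Cellini--Frajria--Papi to see that only $w\in\cw^{\minu}_{\sig}$ occur, and match with the Schubert basis of $H^*(\cy)$ using the Belkale--Kumar product. But as written the proposal has two genuine gaps, and they sit exactly where the paper does its work. First, the statement that ``$\fk$-invariance picks out a single line per pair,'' and in particular that the invariants of $B$ are concentrated in bidegrees $(p,p)$, is not a formal consequence of the diagonal-cohomology formalism. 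One needs Han's isomorphism $\bigoplus_{I\in\Xi}V_I\simeq \wed(\fp)/\langle C_1\rangle$ \emph{together with} the multiplicity-freeness of this module (Panyushev) and the self-duality of each graded piece $\wed^p(\fp)/\langle C_1\rangle\cap\wed^p(\fp)$: these are what force $[\wed^p(\fp)/\langle C_1\rangle \otimes \wed^q(\fp)/\langle C_2\rangle]^{\fk}=0$ for $p\neq q$ and give a one-dimensional invariant line for each minuscule $w$ (Schur). Without invoking these facts your graded-dimension count, and hence your step (d), is unsupported. (Also, the identification with Lie algebra cohomology is via an explicit surjective cochain map $\wed^p(\fp)\to C^p_{(-p)}(\hat{\fu}_{\sig})$, $x_1\wed\cdots\wed x_p\mapsto \det(\ip<x_i,y_j>)$, whose kernel one checks is exactly $\langle C_1\rangle$ --- not a ``truncation'' of the cochain complex --- and the second copy must go to the \emph{opposite} nilradical $\hat{\fu}^-_{\sig}$ so that dual modules pair.)

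Second, and more seriously, your mechanism for matching the products is not the right one. It is neither true nor needed that structure constants $c^w_{u,v}$ with $u,v$ minuscule and $w$ non-minuscule vanish by a codimension count; such terms simply restrict to zero on $\cy$. The real issue is the opposite: the Lie-algebra-cohomology side computes the Belkale--Kumar product $\odot_0$, whose structure constants are $c^w_{u,v}\,\del_{d^w_{u,v},0}$, so a priori it could \emph{drop} terms with $w$ minuscule but $d^w_{u,v}\neq 0$ that the cup product on $H^*(\cy)$ keeps. What must be proved is that for $u,v,w\in\cw^{\minu}_{\sig}$ with $c^w_{u,v}\neq 0$ one automatically has $d^w_{u,v}=0$, so that $\odot_0$ coincides with the cup product on $H^*(\cy)$. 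The paper's Lemma 3.4 does this by a short weight computation, not a dimension count: for minuscule $w$ one has $(\hat{\rho}-w^{-1}\hat{\rho})(d)=|\Phi(w^{-1})|$ evaluated suitably, i.e.\ $=\ell(w)$, since every root in $\Phi(w^{-1})$ takes value $1$ on $d$; combined with $\ell(w)=\ell(u)+\ell(v)$ whenever $c^w_{u,v}\neq 0$, this gives $d^w_{u,v}=-\ell(u)-\ell(v)+\ell(w)=0$. Your step (c) defers exactly this verification, and the justification you do offer would not supply it; until it is in place the degree-by-degree vector space isomorphism does not upgrade to the claimed algebra isomorphism.
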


Before we come to the proof of the theorem, we need to recall the following
results.  The first theorem is a special case of a result due to Garland-
Lepowsky and the second theorem is due to Han.

  \begin{theorem} {\em [K2, Theorem 3.2.7 and Identity (3.2.11.3)]}
As a module for $\hat{\fr}_{\sig}$,
  \[
H^p(\hat{\fu}_{\sig},\bc ) \simeq \bigoplus_{ \substack{w\in\cw'_{\sig}\\
\ell (w)=p}} L(w^{-1}\hat{\rho}-\hat{\rho}) ,
  \]
where $\hat{\rho}$ is any element of $\bigl( \hat{\fh}_{\sig}\bigr)^*$
satisfying $\hat{\rho}(\avi )=1$ for all the simple coroots $\{\alv_0 ,
\dots ,\alv_{\ell}\} \subset \hat{\fh}_{\sig}$ of $\hat{\fg}_{\sig}$
and $L(w^{-1}\hat{\rho}-\hat{\rho})$ denotes the irreducible
$\hat{\fr}_{\sig}$-module with highest weight $w^{-1}\hat{\rho}-
\hat{\rho}$.  Similarly, by [K2, Theorem 3.2.7],
  \[
H^p(\hat{\fu}_{\sig}^-,\bc ) \simeq \bigoplus_{\substack{ w\in\cw'_{\sig},\\
\ell (w)=p }} L(w^{-1}\hat{\rho}-\hat{\rho})^* ,
  \]
where $\hat{\fu}^-_{\sig} := \sum_{i<0} \fg_i\otimes t^i$.
   \end{theorem}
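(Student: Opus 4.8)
The statement to be proved is the Garland--Lepowsky description of the Lie algebra cohomology $H^p(\hat{\fu}_{\sig},\bc)$ (and of $H^p(\hat{\fu}^-_{\sig},\bc)$) as an $\hat{\fr}_{\sig}$-module, here specialized from the general Kac--Moody setting in [K2] to the twisted affine case at hand. Since the reference [K2, Theorem 3.2.7 and Identity (3.2.11.3)] is being cited as the source, the role of this "proof" is to explain how the cited general theorem specializes, i.e.\ to identify $\hat{\fu}_{\sig}$ with the nil-radical of the standard maximal parabolic $\hat{\fp}_{\sig}$ of $\hat{\fg}_{\sig}$ and to match the combinatorial indexing set $\cw'_{\sig}$ with the minimal coset representatives that appear in Kostant's/Garland--Lepowsky's $\fu$-cohomology theorem. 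The plan is to recall the general setup, check the hypotheses, and then simply invoke the cited result.

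First I would set up the general parabolic Kostant framework: for a symmetrizable Kac--Moody algebra with a parabolic subalgebra $\hat{\fp} = \hat{\fr}\oplus\hat{\fu}$, where $\hat{\fu}$ is the nil-radical and $\hat{\fr}$ the Levi, Garland--Lepowsky (generalizing Kostant's finite-dimensional theorem) prove that $H^p(\hat{\fu},\bc)$ decomposes as a direct sum of irreducible $\hat{\fr}$-modules $L(w^{-1}\hat{\rho}-\hat{\rho})$ indexed by $w$ in the set $\cw'$ of minimal-length coset representatives of $\cw/\cw_{\hat{\fr}}$ with $\ell(w)=p$, and that the highest weight vector in the $w$-summand is the ``Kostant form'' $e_{\Phi(w)}^*$ supported on $\Phi(w)=\hat{\Del}^+\cap w\hat{\Del}^-$. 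For the maximal parabolic $\hat{\fp}_{\sig}$ whose Levi is $\hat{\fr}_{\sig}=\fk\otimes t^0\oplus\bc c\oplus\bc d$ and whose nil-radical is $\hat{\fu}_{\sig}=\sum_{i>0}\fg_i\otimes t^i$, the relevant Weyl group of the Levi is exactly the finite Weyl group $W_{\sig}$ of $(\fk,\fh_{\sig})$ (the elements $c,d$ being central contribute nothing), so $\cw'_{\sig}$ as defined in Section 2 is precisely the indexing set in Kostant's theorem. That gives the first displayed isomorphism.

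Next I would obtain the second isomorphism, for $\hat{\fu}^-_{\sig}=\sum_{i<0}\fg_i\otimes t^i$, from the first by Poincar\'e-type duality for $\fu$-cohomology: the standard pairing identifies $H^p(\hat{\fu}^-,\bc)$ with the graded dual of $H^p(\hat{\fu},\bc)$ as $\hat{\fr}_{\sig}$-modules, interchanging $L(\mu)$ with $L(\mu)^*$, which yields the claimed $\bigoplus L(w^{-1}\hat{\rho}-\hat{\rho})^*$. Alternatively one can apply [K2, Theorem 3.2.7] directly to the opposite parabolic and note its nil-radical is $\hat{\fu}^-_{\sig}$, with the same indexing set $\cw'_{\sig}$ since passing to the opposite parabolic does not change $\cw/\cw_{\hat{\fr}}$. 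One small point requiring care is the normalization of $\hat{\rho}$: the statement only asks for \emph{any} $\hat{\rho}\in(\hat{\fh}_{\sig})^*$ with $\hat{\rho}(\avi)=1$ for all simple coroots, and since the simple coroots together with the center span $\hat{\fh}_{\sig}$ only up to the usual affine ambiguity, the weight $w^{-1}\hat{\rho}-\hat{\rho}$ is independent of this choice (two choices differ by a $\cw_{\sig}$-invariant functional, which is killed by $w^{-1}-\mathrm{id}$); I would note this so that the later applications (where a convenient $\hat{\rho}$ is chosen) are unambiguous.

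The main obstacle, such as it is, is bookkeeping rather than mathematics: one must verify that the twisted affine datum $(\hat{\fg}_{\sig},\hat{\fp}_{\sig})$ genuinely satisfies the hypotheses under which [K2, Theorem 3.2.7] is stated --- symmetrizability (automatic for affine type), $\hat{\fp}_{\sig}$ being of ``finite type'' over its Levi in the sense needed, and $\hat{\fu}_{\sig}$ being the nil-radical of a \emph{standard} parabolic so that the Bruhat-theoretic indexing applies --- and then match notation: the $\Phi(w)$ of Section 2 with the $\fu$-support sets of Kostant's theorem, and $\ell(w)=|\Phi(w)|=p$ with the cohomological degree. Since all of this is exactly the content of the cited identities in [K2, Chapter 3], the argument is complete once these identifications are spelled out.
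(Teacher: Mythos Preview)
Your proposal is correct and matches the paper's approach: the paper does not give an independent proof of this theorem but simply cites it as a special case of the Garland--Lepowsky result recorded in [K2, Theorem 3.2.7 and Identity (3.2.11.3)], exactly as you indicate. Your additional remarks---identifying $\hat{\fu}_{\sig}$ as the nil-radical of the standard maximal parabolic, matching $\cw'_{\sig}$ with the minimal coset representatives, obtaining the $\hat{\fu}^-_{\sig}$ statement by duality or by applying the same theorem to the opposite parabolic, and noting the well-definedness of $w^{-1}\hat{\rho}-\hat{\rho}$---are all appropriate fleshing-out of what the paper leaves implicit in the citation.
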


For any $\fb_{\sig}$-stable abelian subspace $I\subset\fp$ of dimension $n$,
$\wed^n(I)$ is a $\fb_{\sig}$-stable line in $\wed^n(\fp )$ and hence generates an irreducible $\fk$-submodule $V_I$ of $\wed^n(\fp )$ with highest weight
space $\wed^n(I)$.  Thus, we get a $\fk$-module map
  \[
\bigoplus_{I\in\Xi} V_I \to \wed (\fp ) \to \wed (\fp )/\ip<C_1> .
  \]
 If $I$ corresponds via Theorem 2.3 to the element $w\in\cw^{\minu}_{\sig}$,
 then $V_I$ has highest weight $(w^{-1}\hat{\rho} -\hat{\rho})_{|_{\fh_{\sig}}}$.

  \begin{theorem}  {\em [H, Theorem 4.7]}  The above $\fk$-module map
  \[
\bigoplus_{I\in\Xi} V_I  \to \wed (\fp )/\ip<C_1>
  \]
is an isomorphism.  Moreover, by [P, Theorem 4.13(2)], the $\fk$-module
$\bigoplus_{I\in\Xi} V_I$ is multiplicity free.
  \end{theorem}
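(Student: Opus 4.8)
The plan is to realize $\wed(\fp)/\ip<C_1>$ as the ``lowest $t$-weight'' subquotient of the Chevalley--Eilenberg cochain complex of $\hat{\fu}_{\sig}$, and then to read off its $\fk$-module structure from Theorem~3.2 and the dictionary of Theorem~2.3. First I would grade $\hat{\fu}_{\sig}=\bigoplus_{i>0}\fg_i\otimes t^i$ by the power of $t$, the $t$-degree of a product of covectors dual to elements of $\fg_{i_1}\otimes t^{i_1},\dots,\fg_{i_r}\otimes t^{i_r}$ being $i_1+\dots+i_r$. Since $[\fg_i\otimes t^i,\fg_j\otimes t^j]\subseteq\fg_{i+j}\otimes t^{i+j}$, the Chevalley--Eilenberg differential $d$ on $\wed^{\bullet}(\hat{\fu}_{\sig})^{*}$ preserves this $t$-grading (as does the $\hat{\fr}_{\sig}$-action), and a monomial of cohomological degree $p$ has $t$-degree $\ge p$, with equality exactly when it lies in $\wed^{p}\bigl((\fp\otimes t)^{*}\bigr)$. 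Hence in $t$-degree $p$ the complex is concentrated in cohomological degrees $\le p$; moreover $d$ annihilates $(\fp\otimes t)^{*}$ (there being no cochains of cohomological degree $2$ and $t$-degree $1$), hence annihilates all of $\wed^{\bullet}\bigl((\fp\otimes t)^{*}\bigr)$, while the cochains of cohomological degree $p-1$ and $t$-degree $p$ are exactly those of the form $\omega\wed\eta$ with $\omega\in\wed^{p-2}\bigl((\fp\otimes t)^{*}\bigr)$ and $\eta\in(\fk\otimes t^{2})^{*}$, for which $d(\omega\wed\eta)=\pm\,\omega\wed(d\eta)$ and $d\eta$ is the transpose of the bracket $\wed^{2}(\fp\otimes t)\to\fk\otimes t^{2}$ evaluated at $\eta$. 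Identifying $\fp\simeq\fp^{*}$ and $\fk\simeq\fk^{*}$ via $\ip<\,,\,>$, that transpose is a nonzero multiple of the map $x\mapsto\sum_i[x,e_i]\wed f_i$, i.e.\ of $c_1$. Consequently the $t$-degree $p$ piece of $H^{p}(\hat{\fu}_{\sig},\bc)$ is, as a $\fk$-module, $\wed^{p}(\fp)\big/\ip<C_1>_{p}$, where $\ip<C_1>_{p}=\wed^{p-2}(\fp)\wed C_1$.

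Next I would compare this with Theorem~3.2. Each summand $L(w^{-1}\hat{\rho}-\hat{\rho})$ of $H^{p}(\hat{\fu}_{\sig},\bc)$ is an irreducible $\hat{\fr}_{\sig}$-module, so it lies in a single $t$-degree, equal to $\sum_{\beta\in\Phi(w^{-1})}\beta(d)$. Since $w\in\cw'_{\sig}$ forces $\Phi(w^{-1})$ to consist of roots of $\hat{\fu}_{\sig}$, every $\beta(d)\ge1$, so this $t$-degree is $\ge\ell(w)=p$, with equality iff every $\beta\in\Phi(w^{-1})$ has $\beta(d)=1$, i.e.\ iff $\hat{\fn}_{\sig}(w^{-1})\subset\fp\otimes t$, i.e.\ iff $w$ is minuscule. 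Thus the $t$-degree $p$ piece of $H^{p}(\hat{\fu}_{\sig},\bc)$ is $\bigoplus_{w\in\cw^{\minu}_{\sig},\,\ell(w)=p}L(w^{-1}\hat{\rho}-\hat{\rho})$, which by Theorem~2.3 and the highest-weight statement recorded just before (3.3) is $\bigoplus_{I\in\Xi,\,\dim I=p}V_I$ as a $\fk$-module. Combining with the first paragraph yields an isomorphism of graded $\fk$-modules $\bigoplus_{I\in\Xi}V_I\simeq\wed(\fp)/\ip<C_1>$; in particular $\dim\bigl(\wed^{n}(\fp)/\ip<C_1>_{n}\bigr)=\sum_{\dim I=n}\dim V_I$, and by [P, Theorem~4.13(2)] this common $\fk$-module is multiplicity free.

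It then remains to show that the explicit $\fk$-module map of the statement --- the one sending $V_I$ into $\wed(\fp)/\ip<C_1>$ through its highest weight line $\wed^{\dim I}(I)$ --- realizes this isomorphism. By Schur's lemma and the multiplicity-freeness just obtained, together with the dimension count, it is enough to see that this map does not kill any $V_I$, i.e.\ that $\wed^{\dim I}(I)\notin\ip<C_1>_{\dim I}$. Here I would use the realization of the first paragraph: $\wed^{\dim I}(\fp)$ sits inside $\wed^{\dim I}(\hat{\fu}_{\sig})^{*}$ as the lowest $t$-degree piece, and passing to cohomology there gives the isomorphism above. By the explicit highest weight cocycle of Garland--Lepowsky's theorem (a wedge of the covectors dual to a basis of $\hat{\fn}_{\sig}(w^{-1})\subset\fp\otimes t$) together with $ev_1\bigl(\hat{\fn}_{\sig}(w^{-1})\bigr)=I$ from Theorem~2.3, this cocycle is --- up to a nonzero scalar and the identifications $\fp\simeq\fp^{*}$, $\fk\simeq\fk^{*}$ --- a generator of $\wed^{\dim I}(I)$, or of $\wed^{\dim I}(\bar I)$ for the opposite abelian subspace $\bar I$; the two cases are interchanged by the Chevalley involution $\tau$ of $\fg$ chosen to commute with $\sig$, which restricts to $\fp$ negating $\fh_{\sig}$-weights, preserves $\ip<\,,\,>$ hence $\ip<C_1>$, and carries $\wed^{\dim I}(\bar I)$ onto $\wed^{\dim I}(I)$ up to sign, so either case suffices. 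Being a nonzero cohomology class, the cocycle is not a coboundary, hence not in the image of $d$, hence not in $\ip<C_1>_{\dim I}$; thus the map is injective, and having the correct image dimension it is an isomorphism.

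The hard part will be exactly this last identification: matching, with the correct conventions for the null root, the affine simple root, and the pairing $(\hat{\fu}_{\sig})^{*}\simeq\hat{\fu}_{\sig}^{-}$, the distinguished highest weight class of $H^{*}(\hat{\fu}_{\sig},\bc)$ attached to a minuscule $w$ with the line $\wed^{\dim I}(I)$ in the lowest $t$-degree subquotient of the first paragraph. The differential computation there --- checking that no cochains from the slots $\fg_i\otimes t^i$ with $i\ge3$ contribute in $t$-degree $p$ and that the surviving part of $d$ is $c_1$ up to a nonzero scalar --- is routine; only the nonvanishing of that scalar matters, since $\ip<C_1>$ depends only on the span $C_1$.
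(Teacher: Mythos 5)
This statement is not proved in the paper at all: it is imported verbatim from [H, Theorem 4.7] (the isomorphism) and [P, Theorem 4.13(2)] (multiplicity-freeness), so there is no internal proof to compare with. That said, your first two paragraphs essentially re-derive what the paper itself establishes inside the proof of Theorem 3.1: the identification of $\wed(\fp)/\ip<C_1>$ with the diagonal cohomology $H^*_D(\hat{\fu}_{\sig})$ (the paper's map $\phi$ and its display (1)), and the identification of $H^p_{(-p)}(\hat{\fu}_{\sig})$ with $\bigoplus_{w\in\cw^{\minu}_{\sig},\,\ell(w)=p} L(w^{-1}\hat{\rho}-\hat{\rho})$ via Theorem 3.2 and the count $(\hat{\rho}-w^{-1}\hat{\rho})(d)\geq \ell(w)$ with equality exactly for minuscule $w$ (the paper's display (5)). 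That part of your argument is correct, and quoting [P] for multiplicity-freeness is exactly what the statement itself does. The genuinely new content you must supply — that the \emph{specific} map through the highest weight lines $\wed^{\dim I}(I)$ is injective, i.e.\ $\wed^{\dim I}(I)\not\subset\ip<C_1>$ — is where your argument has a real gap.

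Concretely: transported through your identification of the bidegree $(p,p)$ cochains of $\hat{\fu}_{\sig}$ with $\wed^p(\fp)$ (which pairs $(\fp\otimes t)^*$ with $\fp$ via $\ip<\,,\,>$), the Garland--Lepowsky extreme-weight cocycle of weight $w^{-1}\hat{\rho}-\hat{\rho}$ is a generator of $\wed^{\dim I}(\bar{I})$, the span of the \emph{opposite} root spaces, not of $\wed^{\dim I}(I)$; so everything hinges on your Chevalley-involution step. There you assert, without proof, the existence of an automorphism $\tau$ of $\fg$ commuting with $\sig$ and equal to $-\mathrm{id}$ on $\fh_{\sig}$ — a true but nontrivial fact that needs a reference or an argument — and even granting $\tau$, ``negating $\fh_{\sig}$-weights'' carries $\bar{I}$ onto $I$ only because every weight occurring in $I$ is nonzero with one-dimensional weight space in $\fp$ (these spaces are real root spaces of $\hat{\fg}_{\sig}$), a point you use silently; weight-negation alone does not pin down where each line goes. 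Note also that ``the explicit highest weight cocycle'' is really the standard multiplicity-one lemma for the weight $w^{-1}\hat{\rho}-\hat{\rho}$ in the cochain complex, which you should cite. The cleanest repair stays entirely inside the tools you already invoke: run the identical argument on $\hat{\fu}^-_{\sig}$ (whose diagonal cohomology the paper also records, Theorem 3.2 and display (2) of the proof of Theorem 3.1). There the bidegree $(p,p)$ cochains again identify with $\wed^p(\fp)$ with coboundaries $\ip<C_1>\cap\wed^p(\fp)$, and the unique cochain of extreme weight $\hat{\rho}-w^{-1}\hat{\rho}$ corresponds, under the form, exactly to a generator of $\wed^{\dim I}(I)$ with $I=ev_1(\hat{\fn}_{\sig}(w^{-1}))$; its class is nonzero, which gives the injectivity on each $V_I$ directly and eliminates the involution from the proof.
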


For any $w\in\cw'_{\sig}$, define the Schubert cohomology class
 $\eps^w\in H^{2\ell (w)}(\cx_{\sig},\bz )$ by
  \[
\eps^w\bigl( [\overline{C(u)}]\bigr) = \del_{w,u} \text{  for }u\in\cw'_{\sig} ,
  \]
where $[\overline{C(u)}]\in H_{2\ell (u)}(\cx_{\sig},\bz )$ denotes the fundamental homology class of $\overline{C(u)}$.

Following Belkale-Kumar [BK, \S6], we define a new product $\odot_0$ in
$H^*(\cx_{\sig},\bz )$ as follows.  Express the standard cup product
  \[
\eps^u\cdot\eps^v = \sum_{w\in\cw'_{\sig}} c^w_{u,v}\eps^w .
  \]
Now, define
  \[
\eps^u\odot_0\eps^v = \sum\, c^w_{u,v}\, \del_{d^w_{u,v},0} \,\eps^w,
  \]
where
  \[
d^w_{u,v} := \bigl( u^{-1}\hat{\rho}+v^{-1}\hat{\rho}  -w^{-1}\hat{\rho} -\hat{\rho} \bigr) (d).
  \]

The product $\odot_0$ descends to a product in $H^*(\cy ,\bz )$ under the restriction map $H^*(\cx_{\sig},\bz )\to H^*(\cy ,\bz )$.

  \begin{lemma}  The product $\odot_0$ coincides with the standard cup
product in $H^*(\cy ,\bz )$.
  \end{lemma}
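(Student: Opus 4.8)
The plan is to show that the defect $d^w_{u,v}$ vanishes automatically whenever $c^w_{u,v}\neq 0$ and all three of $u,v,w$ lie in $\cw^{\minu}_{\sig}$, so that the twisted product $\odot_0$ and the cup product agree on the image of $H^*(\cx_{\sig})\to H^*(\cy)$. Recall that for a minuscule element $x\in\cw^{\minu}_{\sig}$ we have, by the characterization used in the proof of Lemma 2.5, that $\beta(d)=1$ for every $\beta\in\Phi(x^{-1})$; equivalently $\hat{\fn}_{\sig}(x^{-1})\subset\fp\otimes t$, so the weight $x^{-1}\hat\rho-\hat\rho=-\sum_{\beta\in\Phi(x^{-1})}\beta$ evaluated on $d$ is exactly $-\ell(x)$. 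Hence for $u,v,w\in\cw^{\minu}_{\sig}$,
\[
d^w_{u,v}=\bigl(u^{-1}\hat\rho+v^{-1}\hat\rho-w^{-1}\hat\rho-\hat\rho\bigr)(d)=-\ell(u)-\ell(v)+\ell(w)-0=\ell(w)-\ell(u)-\ell(v).
\]
But a structure constant $c^w_{u,v}$ in the cup product of Schubert classes is nonzero only when $2\ell(w)=2\ell(u)+2\ell(v)$ by grading, i.e. $\ell(w)=\ell(u)+\ell(v)$, so $d^w_{u,v}=0$. Thus $\eps^u\odot_0\eps^v=\eps^u\cdot\eps^v$ modulo classes $\eps^w$ with $w\notin\cw^{\minu}_{\sig}$.

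First I would make precise that the restriction map $r\colon H^*(\cx_{\sig},\bz)\to H^*(\cy,\bz)$ sends $\eps^w$ to the corresponding Schubert class of $\cy$ when $w\in\cw^{\minu}_{\sig}$ and to $0$ when $w\notin\cw^{\minu}_{\sig}$: this is immediate from the cellular structure, since $\cy$ is the union of the cells $C(w)$, $w\in\cw^{\minu}_{\sig}$, and by Lemma 2.5 this is a closed (Bruhat-order-downward) subset, so $\{\eps^w|_{\cy}\}_{w\in\cw^{\minu}_{\sig}}$ is precisely the Schubert basis of $H^*(\cy)$ and the other $\eps^w$ restrict to $0$. Next I would expand $\eps^u\cdot\eps^v=\sum_w c^w_{u,v}\eps^w$ in $H^*(\cx_{\sig})$, apply $r$, and split the sum into the part over $w\in\cw^{\minu}_{\sig}$ and the part over $w\notin\cw^{\minu}_{\sig}$; the latter maps to $0$. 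On the surviving part the grading argument above forces $d^w_{u,v}=0$, so each surviving term already carries the factor $\del_{d^w_{u,v},0}=1$, meaning $r(\eps^u\cdot\eps^v)=r(\eps^u\odot_0\eps^v)$. Since $r$ is surjective and multiplicative for the cup product on both sides, and $\odot_0$ on $H^*(\cy)$ is by definition the product induced by $r$ from $\odot_0$ on $H^*(\cx_{\sig})$, this shows $\odot_0$ on $H^*(\cy)$ equals the cup product.

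The one point requiring a little care — and the place I expect the only real friction — is the bookkeeping of how $\odot_0$ \emph{descends} to $H^*(\cy)$: one must check that the ideal $\ker r$ is a $\odot_0$-ideal of $H^*(\cx_{\sig})$ so that the induced product on the quotient $H^*(\cy)$ is well-defined, and then that this induced product is the one being compared with the cup product. This is already asserted in the paragraph preceding the lemma (``The product $\odot_0$ descends to a product in $H^*(\cy,\bz)$ under the restriction map''), so I would simply invoke it; the content of the lemma is then exactly the computation $d^w_{u,v}=\ell(w)-\ell(u)-\ell(v)=0$ on the relevant terms. Everything else — the evaluation $(x^{-1}\hat\rho-\hat\rho)(d)=-\ell(x)$ for minuscule $x$, and the grading constraint on cup-product structure constants — is routine once the minuscule characterization from Lemma 2.5 is in hand. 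I would close by remarking that, dually, this identifies $B^{\fk}$ (which will be shown to be $H^*(\cy)$ with its $\odot_0$-product in Theorem 3.1) with the ordinary cohomology ring of $\cy$.
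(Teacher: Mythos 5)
Your proposal is correct and follows essentially the same route as the paper: the identity $(w^{-1}\hat{\rho}-\hat{\rho})(d)=-\ell(w)$ for minuscule $w$ (via $\beta(d)=1$ on $\Phi(w^{-1})$, equivalently $|\Phi(w^{-1})|=\hat{\rho}-w^{-1}\hat{\rho}$), combined with the grading constraint $\ell(w)=\ell(u)+\ell(v)$ whenever $c^w_{u,v}\neq 0$, gives $d^w_{u,v}=0$ exactly as in the paper's proof. The extra bookkeeping you supply about Schubert classes restricting to the Schubert basis of $H^*(\cy)$ or to zero is fine and is precisely what the paper's preceding assertion about $\odot_0$ descending under restriction encapsulates.
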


  \begin{proof}  For any $w\in\cw_{\sig}$, by [K2, Corollary 1.3.22],
  \[
|\Phi (w)| = \hat{\rho} -w\hat{\rho} ,
  \]
where
  \[
|\Phi (w)| := \sum_{\beta\in\Phi (w)} \beta.
  \]
Thus, for any $w\in\cw^{\minu}_{\sig}$, by its definition
  \beqn
(\hat{\rho} -w^{-1}\hat{\rho} )(d) =\ell (w).
  \eeqn
To prove the lemma, it suffices to show that whenever $c^w_{u,v}\neq 0$ for $u,v,w\in \cw_{\sig}^{\minu}$, $d^w_{u,v}=0$.  But, $c^w_{u,v}\neq 0$ gives
  \beqn
\ell (w) = \ell (u) +\ell (v).
  \eeqn
Thus,
  \begin{align*}
d^w_{u,v} &= \Bigl( u^{-1}\hat{\rho} -\hat{\rho} +v^{-1}\hat{\rho} -\hat{\rho} -\bigl( w^{-1}\hat{\rho}
-\hat{\rho}\bigr) \Bigr)\, (d)\\
&= -\ell (u) -\ell (v) +\ell (w)\\
&= 0 \qquad\text{  by (2)}.
  \end{align*}
  \end{proof}

 \setcounter{equation}{0}
\begin{proof}[Proof of Theorem 3.1]  The cohomology modules
$H^p(\hat{\fu}_{\sig})$ and $H^p(\hat{\fu}^-_{\sig})$ acquire a grading coming from the
 total degree of $t$ in $\wed^p(\hat{\fu}_{\sig})$ and $\wed^p(\hat{\fu}^-_{\sig})$ respectively.  This decomposes
  \[
H^p(\hat{\fu}_{\sig}) = \bigoplus_{m\in\bz_+} H^p_{(-m)} (\hat{\fu}_{\sig}),
  \]
where $H^p_{(-m)}(\hat{\fu}_{\sig})$ denotes the space of elements of
$H^p(\hat{\fu}_{\sig})$ of total $t$-degree $-m$.  Define the diagonal
cohomology
  \[
H^*_D(\hat{\fu}_{\sig}) :=  \bigoplus_{p\in\bz_+} H^p_{(-p)}
(\hat{\fu}_{\sig}),
  \]
which is a subalgebra of $H^*(\hat{\fu}_{\sig})$, and similarly define $H^*_D(\hat{\fu}^-_{\sig})$.

Let $\phi : \wed^p(\fp ) \to H^p_{(-p)}(\hat{\fu}_{\sig})$ be the map induced
from the map $\bar{\phi}: \wed^p(\fp )\to C^p_{(-p)}(\hat{\fu}_{\sig})$,
  \[
\bar{\phi}\bigl( x_1\wed\cdots\wed x_p\bigr) \bigl( y_1\otimes t\wed\cdots\wed y_p\otimes t\bigr) = \det \bigl(\ip< x_i,y_j>\bigr)_{i,j} ,
  \]
(for $x_i,y_j\in\fp$) by taking the cohomology class of the image.  Clearly,
 $\bar{\phi}(x_1\wed\cdots\wed x_p)$ is a cocycle and, moreover,
 $\bar{\phi}$ (and hence $\phi$) is surjective.  It is easy to see that
 $\Ker \bigl({\phi}_{|_{\wed^2(\fp )}}\bigr) = C_1$. Now, take any $\om\in C^{p-1}_{(-p)}
(\hat{\fu}_{\sig})$, where $C^{p-1}_{(-p)}(\hat{\fu}_{\sig})$ denotes the
space of $(p-1)$-cochains on $\hat{\fu}_{\sig}$ with total $t$-degree $-p$.  We can
write
  \[
\om =\sum^N_{i=1} \om^i_1\wed \om^i_2,
  \]
for some $\om^i_1\in C^1_{(-2)}(\hat{\fu}_{\sig})$
and $\om_2^i \in C^{p-2}_{(-p+2)}(\hat{\fu}_{\sig})$.  Then,
  \[
\del\om = \sum^N_{i=1} (\del \om^i_1) \wed \om^i_2,
  \]
since $\om^i_2$ are $\del$-closed, where $\del$ is the standard differential of the cochain complex $C^*(\hat{\fu}_{\sig})$.

From this it is easy to see that $\Ker \phi = \ip< C_1>$.  Thus, we get a graded algebra isomorphism commuting with the $\fk$-module structures:
  \beqn
\frac{\wed^*(\fp )}{\ip< C_1>} \simeq H^*_D(\hat{\fu}_{\sig}) .
  \eeqn
In exactly the same way, we get an isomorphism of graded algebras
commuting with the $\fk$-module structures:
  \beqn
  \frac{\wed^*(\fp )}{\ip< C_2>} \simeq H^*_D(\hat{\fu}^-_{\sig}).
  \eeqn

In particular, $\frac{\wed^p(\fp )}{\ip< C_1>\cap\wed^p(\fp )}$ is a self-dual $\fk$-module
 for any $p\geq 0$.

Combining (1)--(2), we get an isomorphism (for any $p,q \geq 0$)
  \beqn
\biggl[ \frac{\wed^p(\fp )}{\ip< C_1>\cap\wed^p(\fp )} \otimes
\frac{\wed^q(\fp )}{\ip< C_2>\cap\wed^q(\fp )}\biggr]^\fk
\simeq \Bigl[ H^p_D(\hat{\fu}_{\sig}) \otimes
H^q_D(\hat{\fu}^-_{\sig})\Bigr]^{\fk} .
  \eeqn
Since $\frac{\wed^*(\fp )}{\ip< C_1>}$ is multiplicity free (by Theorem 3.3)
 and $\frac{\wed^p(\fp )}{\ip< C_1>\cap\wed^p(\fp )}$ is self-dual
 for any $p\geq 0$, the left side of (3) is nonzero only if $p=q$.
 Moreover, $c$ acts trivially on $H^p_D(\hat{\fu}_{\sig})\otimes H^q_D(\hat{\fu}^-_{\sig})$
 and $d$ acts via the multiplication by $q-p$.  Thus, we have a graded algebra isomorphism:
  \beqn
\biggl[\frac{\wed^*(\fp )}{\ip< C_1>} \otimes
\frac{\wed^*(\fp )}{\ip< C_2>}\biggr]^\fk
\simto \Bigl[ H^*_D(\hat{\fu}_{\sig}) \otimes
H^*_D(\hat{\fu}^-_{\sig})\Bigr]^{\hat{\fr}_{\sig}} .
  \eeqn

By Theorem 3.2, we get
  \beqn
H^p_D(\hat{\fu}_{\sig}) \simeq H^p_D(\hat{\fu}^-_{\sig})^* \simeq
\bigoplus_{\substack{w\in\cw^{\minu}_{\sig}\\ \ell (w)=p}}
L\bigl( w^{-1}\hat{\rho}-\hat{\rho}\bigr),
  \eeqn
as $\hat{\fr}_{\sig}$-modules.  Combining (4)--(5), we get the isomorphism
  \beqn
\biggl[ \frac{\wed^*(\fp )}{\ip< C_1>} \otimes
\frac{\wed^*(\fp )}{\ip< C_2>}\biggr]^{\fk} \simeq
\bigoplus_{w\in\cw^{\minu}_{\sig}}
\Bigl[ L\bigl( w^{-1}\hat{\rho} -\hat{\rho}\bigr) \otimes
L\bigl( w^{-1}\hat{\rho}-\hat{\rho}\bigr)^*\Bigr]^{\hat{\fr}_{\sig}} .
  \eeqn

Now, by a similar argument to that given in [K3, Section 2.4], the proof of
Theorem 3.1 follows.  We omit the details.
  \end{proof}

 \section{Structure of the Algebra $A^{\fk}$}

Let $G$ be a connected, simply-connected complex algebraic group with Lie algebra $\fg$.  The involution $\sig$ of $\fg$, of course, induces an involution of
$G$.  Choose a maximal compact subgroup $G_o$ of $G$ which is stable under $\sig$
 and such that the subgroup $K_o := G^{\sig}_o$ of $\sig$-invariants is a maximal
 compact subgroup of $K := G^{\sig}$ (cf. [He]).
 Moreover, as is well known, $K$ is connected and hence so is $K_o$.

Let $\Om^{\sig}(G_o)$ be the space of all continuous maps $f: S^1 \to G_o$ which are $\sig$-equivariant, i.e.,
  \[
f(-z) = \sig (f(z))\qquad\text{  for all }z\in S^1 .
  \]
We put the compact-open topology on $\Om^{\sig}(G_o)$.  Clearly, the subspace
of constant loops can be identified with $K_o$.  Equivalently, we can view
$\Om^{\sig}(G_o)$ as the space of continuous maps $\bar{f}: [0, 2\pi ]
\to G_o$ such that
  \[
\bar{f}(t+\pi ) = \sig (\bar{f}(t)), \qquad\text{  for all }0\leq t\leq \pi .
  \]

In particular, $\bar{f}(2\pi ) = \sig^2(\bar{f}(0)) = \bar{f}(0)$.  The correspondence $f\rtsquig \bar{f}$ is given by $\bar{f}(t) = f(e^{it})$, for $0\leq t\leq 2\pi$.

Consider the fibration
  \[
\Om^{\sig}_1(G_o) \to \Om^{\sig}(G_o)/K_o \overset{\gamma}\longrightarrow G_o/K_o ,
  \]
where $\gamma(fK_o) = f(1)K_o$ for $f\in\Om^{\sig}(G_o)$ and $\Om_1^{\sig}(G_o)$
is the subspace of $\Om^{\sig}(G_o)$ consisting of those $f$ such that $f(1)=1$.

Of course, $\Om_1^{\sig}(G_o)$ can be identified with the based loop space $\Om_1(G_o)$ of $G_o$ under $f\rtsquig \bar{f}_{|_{[0, \pi ]}}$.

Define the $\fk$-module map $\bar{c} : \fk^* \to \wed^2(\fp )^*$ by $(\bar{c} f)(x\wed y) = f([x,y])$,
for $x,y\in\fp$.  This gives rise to a map (still denoted by)
  \[   \bar{c} : S(\fk^*) \to \wed (\fp )^* .
  \]
Consider the restriction of $\bar{c}$ to the subring of $\fk$-invariants
  \[
c: S(\fk^*)^{\fk} \to C(\fg ,\fk )\simeq [\wed (\fp )^*]^{\fk} .
  \]

Then, the map $c$ is the Chern-Weil homomorphism with respect to a
$G_o$-invariant connection on the $G_o$-equivariant principle $K_o$-bundle
$G_o\to G_o/K_o$.

Observe that since $\fk$ is the $+1$ eigenspace of an involution of $\fg$,
the differential $\del \equiv 0$ on $C^*(\fg ,\fk )$.  Thus,
  \[
C^*(\fg ,\fk ) \simeq H^*(\fg ,\fk ) \simeq H^*(G_o/K_o).
  \]

Thus, in our case, we can think of $c$ as the map $c: S(\fk^*)^{\fk}
\to H^*(\fg ,\fk ) \simeq H^*(G_o/K_o)$.

We now recall the following result due to H. Cartan on the cohomology of
$G_o/K_o$ with complex coefficients (cf. [C, \S10]).

  \begin{theorem}  There exists a finite-dimensional graded subspace
$V\subset H^*(G_o/K_o)$ concentrated in odd degrees such that, as graded
algebras,
  \[
H^*(G_o/K_o) \simeq \wed(V)\otimes \Imo c.
  \]
  \end{theorem}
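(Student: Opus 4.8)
The plan is to identify the cohomology $H^*(G_o/K_o;\bc)$ with a relative Lie algebra cohomology and then apply the classical structure theory for such algebras. First I would invoke the Cartan–Chevalley–Eilenberg theorem that, since $G_o$ is compact and $K_o\subset G_o$ is a closed connected subgroup, $H^*(G_o/K_o;\bc)\simeq H^*(\fg,\fk;\bc)$, where the right side is the cohomology of the relative Weil complex; concretely it is computed from the complex $C^*(\fg,\fk)=[\wed(\fg/\fk)^*]^{\fk}\simeq[\wed(\fp)^*]^{\fk}$. As observed in the excerpt, because $\fk$ is the fixed-point set of an involution, the relative differential vanishes identically on $C^*(\fg,\fk)$, so $H^*(\fg,\fk)\simeq C^*(\fg,\fk)\simeq[\wed(\fp)^*]^{\fk}$ as graded algebras. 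This reduces Theorem~4.1 to a purely Lie-algebraic statement about the invariant algebra $[\wed(\fp)^*]^{\fk}$.

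Next I would bring in the Chern–Weil / transgression picture for the principal $K_o$-bundle $G_o\to G_o/K_o$. The Chern–Weil homomorphism $c:S(\fk^*)^{\fk}\to H^*(G_o/K_o)$ is exactly the edge map, and its image $\Imo c$ is the subalgebra of ``characteristic classes.'' The key structural input is H. Cartan's theorem on the cohomology of homogeneous spaces of compact Lie groups (the theory of the Cartan model / Weil algebra, see [C, \S10]): for a compact symmetric pair one has a transgression theorem producing a graded vector space $V$ of \emph{primitive} (odd-degree) generators, built from the primitive generators of $H^*(G_o)$ that survive to $G_o/K_o$, together with a choice of splitting. One then shows that the natural algebra map $\wed(V)\otimes\Imo c\to H^*(G_o/K_o)$, sending $V$ via the chosen transgression lifts and $\Imo c$ via the Chern–Weil map, is an isomorphism. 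The surjectivity is read off from a Leray–Hirsch / spectral-sequence argument for the bundle $K_o\to G_o\to G_o/K_o$ (the Serre spectral sequence of $G_o\to G_o/K_o$, or equivalently the Eilenberg–Moore spectral sequence $\operatorname{Tor}_{H^*(BK_o)}(\bc,H^*(BG_o))$), while injectivity follows from a dimension/Poincaré-series count using the fact that $H^*(G_o)$ is an exterior algebra on odd generators and $H^*(BK_o)=S(\fk^*)^{\fk}$ is a polynomial algebra, together with the regularity (the images of the universal transgressions form a regular sequence) of the relevant map $H^*(BK_o)\to H^*(BG_o)$.

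Concretely, I would carry out the steps in this order: (1) fix the isomorphism $H^*(G_o/K_o)\simeq[\wed(\fp)^*]^{\fk}$ and note the vanishing differential; (2) recall the fibration $G_o/K_o\to BK_o\to BG_o$ (the classifying-space fibration for the bundle $G_o\to G_o/K_o$) and set up its Eilenberg–Moore spectral sequence, whose $E_2$-term is $\operatorname{Tor}_{H^*(BG_o)}(\bc,H^*(BK_o))$; (3) using that $H^*(BG_o)$ is a polynomial algebra on generators in degrees $2d_1,\dots,2d_r$ and that the restriction map $H^*(BG_o)\to H^*(BK_o)$ splits these into ``regular'' and ``non-regular'' parts, compute this Tor to be $\wed(V)\otimes(\text{image of }H^*(BK_o))$, with $V$ in odd degrees $2d_j-1$ corresponding to the generators of $H^*(BG_o)$ killed in $H^*(BK_o)$; (4) observe the spectral sequence collapses for degree-parity reasons, and identify $\Imo c$ with the image of $H^*(BK_o)=S(\fk^*)^{\fk}$ under $c$; (5) assemble the algebra isomorphism, choosing actual cohomology classes in $H^*(G_o/K_o)$ representing $V$ (transgression lifts), which is possible because the classes in $V$ sit in odd degree while $\Imo c$ is concentrated in even degree, so there is no multiplicative interference. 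The main obstacle is step (3)–(4): one must show that the Koszul-type resolution used to compute the Tor actually degenerates to give a free exterior algebra over the characteristic subalgebra — i.e., that the relevant ideal in $S(\fk^*)^{\fk}$ is generated by a regular sequence — and that the associated-graded isomorphism lifts to an honest algebra isomorphism on $H^*(G_o/K_o)$; this is precisely the content of Cartan's transgression theorem, so I would cite [C, \S10] for this delicate point rather than reprove it, and simply verify that our symmetric-space setting (with $\fk$ reductive in $\fg$) satisfies its hypotheses.
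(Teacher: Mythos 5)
Your proposal is fine, and it matches the paper's treatment: the paper gives no proof of this statement at all, simply recalling it as H.~Cartan's classical theorem with the citation [C, \S10], which is exactly where your outline also places the one genuinely delicate point (the regularity/collapse, i.e.\ that a symmetric pair satisfies Cartan's hypotheses and the multiplicative structure lifts). So your sketch is a correct modern rendering of the same approach, with the substance deferred to the same reference.
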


  \begin{corollary}  Consider the map $\gam : \Om^{\sig}(G_o)/K_o \to G_o/K_o$
defined earlier (obtained from the evaluation at 1).  Then, the induced
map in cohomology
  \[
\gam^*: H^*(G_o/K_o) \to H^*(\Om^{\sig}(G_o)/K_o)
  \]
under the identification
  \[
H^*(G_o/K_o) \simeq \wed (V)\otimes \Imo c
  \]
of the above theorem, satisfies
  \[   \gam^*_{|V} \equiv 0.
  \]
In particular, $\Imo (\gam^*) = \gam^*(\Imo c)$.
  \end{corollary}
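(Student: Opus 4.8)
The plan is to reduce the statement $\gam^*_{|V}\equiv 0$ to a rational-homotopy/loop-space computation, using that $V$ sits in odd degrees and that $\Om^{\sig}(G_o)/K_o$ fibers over $G_o/K_o$ with fiber the based loop space $\Om_1(G_o)$, which has cohomology concentrated in even degrees (since $G_o$ is a compact connected Lie group, $H^*(\Om_1 G_o)$ is a polynomial/divided-power algebra on even generators). The key point is that an odd-degree class pulled back from the base that becomes a permanent cocycle in the total space must already be detected on a section-like subspace, but here the fibration has a natural splitting after evaluation, so the obstruction is controlled by the transgression in the Serre spectral sequence of $\gam$.

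First I would set up the Serre spectral sequence $E_2^{p,q}=H^p(G_o/K_o)\otimes H^q(\Om_1 G_o)\Rightarrow H^*(\Om^{\sig}(G_o)/K_o)$, noting $E_2^{p,q}=0$ for $q$ odd. For a class $v\in V\subset H^{2k+1}(G_o/K_o)$, I would argue that $\gam^* v = 0$ by showing $v$ does not survive to $E_\infty$ as a nonzero pullback class; equivalently, I want to show $v$ lies in the image of a differential or that its image in $E_\infty^{2k+1,0}$ vanishes. The cleanest route: recall from H.\ Cartan's theorem that $V$ is precisely the \emph{primitive/transgressive} part, i.e.\ $H^*(G_o/K_o)\simeq \wed(V)\otimes\Imo c$ where $\Imo c$ is the image of the Chern--Weil map. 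Second, I would identify $\gam$ up to homotopy with the evaluation-at-$1$ map and compare it with the analogous fibration for the \emph{untwisted} free loop space or, better, with the path-loop fibration $\Om_1 G_o \to PG_o \to G_o$: the transgression there sends the even generators of $H^*(\Om_1 G_o)$ to the primitive generators of $H^*(G_o)$, and under the map $G_o\to G_o/K_o$ (or rather the relevant comparison of fibrations induced by $f\mapsto f(1)$ versus the constant-loop section) the odd classes in $V$ are exactly hit.

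Third — and this is where the real content lies — I would make the comparison precise. The subspace $V$ consists of classes that transgress, in the universal $K_o$-bundle picture, from $H^*(BK_o)$-torsion; equivalently, on passing to $\Om^{\sig}(G_o)/K_o$ the loop-space fiber contributes, via its even-degree cohomology and the multiplicativity of the spectral sequence, classes whose transgressions in $E_{2k+2}^{2k+2,0}\twoheadleftarrow \cdots$ land on $V$. Concretely I would show: (a) the fiber inclusion $\Om_1 G_o \hookrightarrow \Om^{\sig}(G_o)/K_o$ is surjective in rational cohomology in each degree in the relevant range — this uses that $K_o$ is connected and that the section $G_o/K_o\to\Om^{\sig}(G_o)/K_o$ given by constant loops splits off exactly the $\Imo c$ part; (b) hence $E_\infty^{0,q}=E_2^{0,q}=H^q(\Om_1G_o)$, which forces all of $E_2^{2k+1,0}$ with $2k+1>0$ to be killed by incoming differentials, i.e.\ $\gam^*$ annihilates everything in positive odd degree of the base — in particular $V$. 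The last clause, $\Imo(\gam^*)=\gam^*(\Imo c)$, is then immediate from $H^*(G_o/K_o)=\wed(V)\otimes\Imo c$ together with $\gam^*_{|V}\equiv 0$ and multiplicativity of $\gam^*$.

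The main obstacle is step (a)/(b): establishing that the fiber $\Om_1 G_o$ injects the right way — equivalently that the spectral sequence of $\gam$ collapses onto $\Imo c$ along the base edge and onto $H^*(\Om_1 G_o)$ along the fiber edge. This requires knowing that the nontriviality of the fibration (emphasized in the introduction as the extra difficulty over the diagonal case) manifests \emph{only} through differentials that kill the odd-degree base classes $V$ and do not disturb either edge; I expect this to follow by comparing with the path-loop fibration of $G_o$ pulled back along $G_o/K_o\to BK_o\to BG_o$ — but pinning down that the transgression matches Cartan's $V$ exactly, rather than some subquotient, is the delicate part and will likely need the explicit description of the Weil algebra / the fact that $\del\equiv 0$ on $C^*(\fg,\fk)$ noted just above the theorem.
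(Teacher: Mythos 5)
Your argument does not close, and its key step is actually false in the situation at hand. Everything rests on your claims (a) and (b), which you yourself flag as "where the real content lies" and leave unproved. Worse, (a) cannot hold in general: with field coefficients, surjectivity of the fiber restriction $H^*(\Om^{\sig}(G_o)/K_o)\to H^*(\Om_1(G_o))$ would, by Leray--Hirsch, give an isomorphism of $H^*(G_o/K_o)$-modules $H^*(\Om^{\sig}(G_o)/K_o)\simeq H^*(G_o/K_o)\otimes H^*(\Om_1(G_o))$, hence \emph{injectivity} of $\gam^*$ --- the exact opposite of the conclusion $\gam^*_{|V}\equiv 0$ whenever $V\neq 0$ (and $V\neq 0$ for many irreducible symmetric spaces, e.g.\ $SU(n)/SO(n)$). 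The inference in (b) is also not valid as stated: survival of the fiber edge $E_\infty^{0,q}=E_2^{0,q}$ does not by itself force the odd base-edge classes to be killed by differentials; you would still need to know something about the total space. So the spectral-sequence comparison with the path--loop fibration, the transgression matching with Cartan's $V$, etc., is both unfinished and aimed in a direction that cannot succeed as formulated.

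The fact about the total space that you are missing is precisely the one ingredient the paper uses, and it makes the proof a one-liner: by Theorem 4.4, $\Om^{\sig}(G_o)/K_o$ is homotopy equivalent to the twisted affine flag (ind-)variety $\cx_{\sig}=\cg_{\sig}/\cp_{\sig}$, which by the Bruhat decomposition is a CW complex built entirely out of even real-dimensional cells $C(w)\cong\bc^{\ell(w)}$; hence $H^*(\Om^{\sig}(G_o)/K_o)$ is concentrated in even degrees. Since $V$ lies in odd cohomological degrees, every element of $\gam^*(V)$ is an odd-degree class in a space with no odd cohomology, so $\gam^*_{|V}\equiv 0$. The final clause $\Imo(\gam^*)=\gam^*(\Imo c)$ then follows, as you correctly note, from Cartan's isomorphism $H^*(G_o/K_o)\simeq\wed(V)\otimes\Imo c$ and multiplicativity of $\gam^*$; that part of your write-up is fine. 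No spectral sequence, transgression analysis, or comparison of fibrations is needed, and the nontriviality of the fibration $\gam$ is irrelevant to this corollary.
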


  \begin{proof}  This follows immediately from the fact that
$H^*(\Om^{\sig}(G_o)/K_o)$ is concentrated in even degrees only and $V$
lies in odd cohomological degrees.
  \end{proof}

  Let  $L^{\sig}(\fg )$ be the twisted loop algebra $\bigoplus_{i\in\bz}\fg_i\otimes t^i$, i.e.,
  $L^{\sig}(\fg )$ is the space of all algebraic maps
$f: \bc^*\to\fg$ satisfying $f(-z) =\sig (f(z))$ for all $z\in\bc^*$ and the Lie
algebra structure is obtained by taking the pointwise bracket.  This is a  subalgebra
of the loop algebra
\[L(\fg):=\fg\otimes \Bbb C[t,t^{-1}].\]
  Let $L^{\sig}_1(\fg )$ be the kernel of the evaluation map $L^{\sig}(\fg )
\to \fg$ at 1, $x\otimes a(t)\mapsto a(1)\, x$.  Similarly, by
 $L^{\sig}_1(G_o )$, we mean the set of algebraic maps $f: S^1\to G_o$ with
$f(-z) =\sig (f(z))$ for all $z\in S^1$ and $f(1)=1$ (where we call a map
$f: S^1\to G_o$ {\em algebraic} if it extends to an algebraic map $\bar{f}:
\bc^*\to G$).

We recall the following result from [K1, Theorem 1.6].

  \begin{theorem}  Appropriately defined, the integration map defines an
algebra isomorphism in cohomology
  \[
 H^*(L^{\sig}(\fg ), \fk ) \simeq H^*(\cx_{\sig}).
  \]
Similarly, we have an algebra isomorphism
  \[
H^*(L^{\sig}_1(\fg )) \simeq H^*(L_1^{\sig}(G_o)),
  \]
   where  $L^{\sig}_1(G_o)$ is
endowed with the Hausdorff topology induced from an ind-variety structure.
  \end{theorem}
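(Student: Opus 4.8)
The statement is the twisted loop-group incarnation of the classical Chevalley--Eilenberg--Cartan isomorphism $H^*(\fg,\fk)\simeq H^*(G_o/K_o)$, and my plan is to prove the two displayed isomorphisms in parallel, reducing the infinite-dimensional situation to that classical finite-dimensional one.

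\emph{Step 1: topological models.} First I would pin down the spaces. Using the (twisted) Iwasawa/Birkhoff decomposition of the twisted loop group one identifies $\cx_{\sig}=\cg_{\sig}/\cp_{\sig}$, up to homeomorphism, with the homogeneous space $L^{\sig}(G_o)/K_o$ of the unitary twisted loop group, and likewise $L_1^{\sig}(G_o)$ (with its ind-variety Hausdorff topology) with the based analogue. Under these identifications $\cx_{\sig}$ is acted on transitively by $L^{\sig}(G_o)$ with isotropy $K_o$, so its smooth de Rham complex is directly comparable with the relative Chevalley--Eilenberg complex $C^*(L^{\sig}(\fg),\fk)=\bigl(\wed^*(L^{\sig}(\fg)/\fk)^*\bigr)^{\fk}$, and similarly for the based loop space with $C^*(L_1^{\sig}(\fg))$.

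\emph{Step 2: the integration map.} Next I would exhibit the comparison map of [K1] as a morphism of differential graded algebras between the relative Chevalley--Eilenberg complex and the de Rham complex of $\cx_{\sig}$: a relative cochain is read as an $L^{\sig}(G_o)$-invariant form (using averaging over the \emph{compact} group $K_o$ with normalized Haar measure to land in the $\fk$-invariants), and conversely a form is ``integrated'' back to a cochain by evaluation at the base point after $K_o$-averaging. Since this is a map of DGAs it automatically induces an algebra homomorphism on cohomology, so multiplicativity is free once bijectivity is established; the based statement $H^*(L_1^{\sig}(\fg))\simeq H^*(L_1^{\sig}(G_o))$ is the restriction of the same construction.

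\emph{Step 3: bijectivity.} To prove the induced map is an isomorphism I would compare both sides to the same combinatorial answer. On the topological side, the Bruhat decomposition $\cx_{\sig}=\bigsqcup_{w\in\cw'_{\sig}}C(w)$ into affine cells $\bc^{\ell(w)}$ shows $H^*(\cx_{\sig})$ is concentrated in even degrees with $\bz$-basis the Schubert classes $\{\eps^w:w\in\cw'_{\sig}\}$, $\eps^w$ in degree $2\ell(w)$. On the Lie algebra side, the Hochschild--Serre spectral sequence for the ideal $\hat{\fu}_{\sig}\subset\hat{\fp}_{\sig}$ (equivalently the one computing $H^*(L^{\sig}(\fg),\fk)$) has $E_2$-term assembled from $H^*(\hat{\fu}_{\sig})$, which by Theorem 3.2 (Garland--Lepowsky) is $\bigoplus_w L(w^{-1}\hat{\rho}-\hat{\rho})$; passing to $\hat{\fr}_{\sig}$-invariants (and tensoring with $\wed^*\fp^*$) collapses this to one line for each $w\in\cw'_{\sig}$, again in degree $2\ell(w)$. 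The integration map is a morphism of the two spectral sequences; by degree reasons and triangularity with respect to the Bruhat order one checks it sends the class attached to $w$ to $\eps^w$, hence it is an isomorphism on $E_2$ and so on the abutment. The based version then follows either by the identical argument or from the relative case via the fibration $\Om_1^{\sig}(G_o)\to\Om^{\sig}(G_o)/K_o\overset{\gam}\to G_o/K_o$ together with the above corollary on $\gam^*$.

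\emph{Main obstacle.} The hard part is not the algebra but the infinite-dimensional analysis behind Steps 1 and 2: equipping $L^{\sig}(G_o)$, $\cx_{\sig}$ and $L_1^{\sig}(G_o)$ with compatible (ind-)manifold structures, making rigorous sense of smooth forms, the $K_o$-averaging integral and the resulting invariant-forms subcomplex, and --- crucially --- ensuring that cohomology commutes with the relevant passage to the limit, i.e.\ the vanishing of the associated $\lim^{1}$-term. Here this vanishing is guaranteed by the degreewise finite-dimensionality coming from the Bruhat decomposition and the Garland--Lepowsky computation, so once the limiting formalism is set up, the whole argument reduces to the classical finite-dimensional Chevalley--Eilenberg--van Est theorems, which I would invoke as a black box.
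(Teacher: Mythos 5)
First, a point of comparison: the paper does not prove this statement at all --- it is quoted from [K1, Theorem 1.6] --- so there is no internal proof to match; your outline has to be judged on its own. Its general shape (an integration map from relative Lie algebra cochains, compared against the Schubert basis, with Garland--Lepowsky as the Lie-algebra input) is the right kind of argument, but the points you defer are precisely where the content of [K1] lies, and two steps are wrong as stated. On the Lie-algebra side, the Hochschild--Serre spectral sequence for $\hat{\fu}_{\sig}\subset\hat{\fp}_{\sig}$ computes cohomology of the parabolic $\hat{\fp}_{\sig}$, not $H^*(L^{\sig}(\fg),\fk)$, and ``tensoring with $\wed^*\fp^*$'' has no place here, since $L^{\sig}(\fg)/\fk=\bigoplus_{i\neq 0}\fg_i\otimes t^i$. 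What is needed is a Kostant-type theorem for the pair $(L^{\sig}(\fg),\fk)$: a decomposition of $C^*(L^{\sig}(\fg),\fk)$ through $\wed(\hat{\fu}^-_{\sig})^*\otimes\wed(\hat{\fu}_{\sig})^*$ together with a Laplacian or filtration argument whose degeneration must actually be proved, giving roughly $\bigoplus_p\bigl[H^p(\hat{\fu}^-_{\sig})\otimes H^p(\hat{\fu}_{\sig})\bigr]^{\hat{\fr}_{\sig}}$, after which Garland--Lepowsky yields one class per $w\in\cw'_{\sig}$ in degree $2\ell(w)$. Likewise, $K_o$-averaging only produces $\fk$-invariance of cochains; since $L^{\sig}(G_o)$ is not compact, it does not show that invariant forms compute $H^*(\cx_{\sig})$, and the asserted ``triangularity with respect to the Bruhat order'' of the integration pairing against Schubert cycles is exactly the nondegeneracy statement that constitutes the theorem --- it cannot be dismissed as a degree check.

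The based isomorphism is also not ``the restriction of the same construction.'' In the twisted setting $L_1^{\sig}(G_o)$ is not $\cx_{\sig}$: the fibration $\Om^{\sig}_1(G_o)\to\Om^{\sig}(G_o)/K_o\overset{\gam}\to G_o/K_o$ is nontrivial (the paper singles this out as a main difficulty), so $\Om_1^{\sig}(G_o)\simeq L_1^{\sig}(G_o)$ carries no Bruhat--Schubert cell structure and your ``identical argument'' does not apply; nor does the alternative route work as stated, since extracting the fiber's cohomology from the fibration needs a collapse/Leray--Hirsch argument (in the paper this role is played by [PS, Proposition 4.11.3] in Theorem 4.6), and one still needs an independent computation of the absolute cohomology $H^*(L_1^{\sig}(\fg))$ together with its own comparison map. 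So the second displayed isomorphism requires a separate proof, which your sketch does not supply.
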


Analogous to the result of Garland-Raghunathan [GR], we have the following.

  \begin{theorem}  The inclusion  $L^{\sig}_1(G_o) \hookrightarrow
\Om_1^{\sig}(G_o)$ is a homotopy equivalence, where  $L^{\sig}_1(G_o)$ is
endowed with the Hausdorff topology as in the previous theorem
and $\Om^{\sig}_1(G_o)$ is equipped with the compact-open topology.

Similarly, the projective ind-variety $\cx_{\sig}$ under the Hausdorff topology is homotopically equivalent with the space $\Om^{\sig}(G_o)/K_o$.
  \end{theorem}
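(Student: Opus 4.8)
The plan is to deduce both homotopy equivalences from the Garland–Raghunathan theorem [GR] together with the untwisting isomorphism relating the $\sig$-twisted objects to the untwisted ones for the group $G_o$ (or a suitable double cover). First I would recall that, in the untwisted case, Garland–Raghunathan proved that the inclusion of the based \emph{algebraic} (or polynomial) loop group $L_1(G_o)\hookrightarrow\Om_1(G_o)$ of continuous based loops is a homotopy equivalence, and likewise that the affine Grassmannian $\cx = \cg/\cp$, with its Hausdorff topology, is homotopy equivalent to $\Om(G_o)/K_o$ — these being the models underlying the standard statement $\Om(G_o)\simeq \cg/\cp$. The key point is then to transport these statements through the identification $\Om^{\sig}_1(G_o)\cong\Om_1(G_o)$ already recorded in the excerpt (via $f\rtsquig\bar f|_{[0,\pi]}$), under which the algebraic subspace $L^{\sig}_1(G_o)$ corresponds to $L_1(G_o)$; and through the analogous identification of the twisted affine Grassmannian $\cx_{\sig}$ with the relevant (possibly twisted) homogeneous space for $\Om^{\sig}(G_o)$.

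The key steps, in order, are: (1) Fix the reparametrization $f\mapsto\bar f|_{[0,\pi]}$ exhibiting $\Om^{\sig}_1(G_o)\xrightarrow{\ \sim\ }\Om_1(G_o)$ as a homeomorphism, and check that it carries the subspace of algebraic $\sig$-equivariant loops $L^{\sig}_1(G_o)$ homeomorphically onto the subspace of algebraic (polynomial) based loops $L_1(G_o)$ — here one must verify that $\sig$-equivariance translates exactly into the condition that, after reparametrization, the loop is polynomial in the untwisted variable, which is the classical passage between twisted and untwisted loop algebras, $L^{\sig}(\fg)\subset L(\fg)$ with grading $\fg_i\otimes t^i$. (2) Apply the Garland–Raghunathan theorem in the untwisted setting to conclude $L_1(G_o)\hookrightarrow\Om_1(G_o)$ is a homotopy equivalence, then conjugate by the homeomorphisms of step (1) to obtain the first assertion. (3) For the second assertion, identify $\cx_{\sig}=\cg_{\sig}/\cp_{\sig}$ with $\Om^{\sig}(G_o)/K_o$ by combining the first homotopy equivalence with the fibration $\Om^{\sig}_1(G_o)\to\Om^{\sig}(G_o)/K_o\xrightarrow{\gamma}G_o/K_o$ and the parallel Bruhat/Birkhoff description of $\cx_{\sig}$; concretely, one matches the two spaces cell-by-cell using the Bruhat decomposition $\cx_{\sig}=\bigsqcup_{w\in\cw'_{\sig}}C(w)$ recalled in Section 2 and the corresponding Bruhat stratification of $\Om^{\sig}(G_o)/K_o$, and then invokes [K1, Theorem 1.6] (Theorem~4.5 above) to see that the two are homotopy equivalent. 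An alternative for step (3): use that $\cx_{\sig}$ has an algebraic cell structure with only even-dimensional cells, so it is formal, and compare Poincaré series / homotopy groups directly against $\Om^{\sig}(G_o)/K_o$, which by the fibration and step (1) has the homotopy type built from $\Om_1(G_o)$ and $G_o/K_o$.

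The main obstacle I expect is step (1)–(3) \emph{bookkeeping of topologies and the twisting}: one must be careful that the ``Hausdorff topology induced from an ind-variety structure'' on $L^{\sig}_1(G_o)$ genuinely agrees, under the reparametrization homeomorphism, with the subspace topology that makes the untwisted Garland–Raghunathan argument apply, and that passing to $G_o$ (rather than a finite cover) does not introduce discrepancies in $\pi_0$ or $\pi_1$ — this is why the excerpt emphasizes that $K$, hence $K_o$, is connected. A secondary delicate point is that the twisted affine Grassmannian $\cx_{\sig}$ is only an \emph{ind}-variety, so ``homotopy equivalence'' must be understood with respect to its colimit (Hausdorff) topology, and one should check the homotopy equivalence is compatible with the exhaustion by finite-dimensional Schubert varieties $\overline{C(w)}$; once the cell structures are matched this is routine, but it is the step that requires genuine care rather than a direct citation. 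Modulo these points, the proof is a twisted adaptation of [GR] combined with Theorem~4.5, exactly parallel to the untwisted statement $\Om(G_o)\simeq\cg/\cp$, and I would present it as such, omitting the routine verifications.
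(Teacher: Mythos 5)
There is a genuine gap, and it sits at the heart of your strategy: step (1) is false. The homeomorphism $\Om^{\sig}_1(G_o)\simeq \Om_1(G_o)$, $f\rtsquig \bar f_{|_{[0,\pi]}}$, is a reparametrization (in effect the substitution $z\mapsto z^{1/2}$), and this does \emph{not} carry the algebraic twisted loops $L^{\sig}_1(G_o)$ onto the algebraic untwisted loops $L_1(G_o)$. If $f:\bc^*\to G$ is algebraic and $\sig$-equivariant, the reparametrized loop $w\mapsto f(w^{1/2})$ is only ``algebraic on the double cover'' and is in general not an algebraic loop in $w$ at all (already $f(z)=\exp(\eps\, x z)$ with $x\in\fp$ gives $\exp(\eps\, x\, w^{1/2})$); conversely a typical element of $L_1(G_o)$ is not in the image. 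The inclusion $L^{\sig}(\fg)=\bigoplus_i\fg_i\otimes t^i\subset L(\fg)$ that you cite is a containment, not an untwisting isomorphism, and cannot be used to identify the two algebraic loop spaces. That the twisted objects are genuinely different is also visible in your step (3): $\cx_{\sig}$ is in general not homotopy equivalent to the untwisted $\cg/\cp\simeq\Om_1(G_o)$, since $H^*(\cx_{\sig})\simeq H^*(\Om^{\sig}(G_o)/K_o)$ contains the factor coming from $H^*(G_o/K_o)$ via the fibration $\Om^{\sig}_1(G_o)\to\Om^{\sig}(G_o)/K_o\to G_o/K_o$. So the theorem cannot be obtained by conjugating the untwisted Garland--Raghunathan result through a homeomorphism; one must rerun the Garland--Raghunathan argument itself (Bruhat/Birkhoff decomposition of the twisted loop group, or the energy-functional/Morse-theoretic approach, applied to $\sig$-equivariant loops) in the twisted setting. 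This is exactly what the paper means by ``Analogous to the result of Garland--Raghunathan [GR]''; note the paper gives no written proof of this theorem, so the substance you label as ``routine bookkeeping'' (the twisted Bruhat stratification of $\Om^{\sig}(G_o)/K_o$ and its compatibility with the ind-variety exhaustion of $\cx_{\sig}$) is precisely the content that has to be established.

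A secondary weakness: in step (3) you propose to invoke Theorem 4.3 (the isomorphism $H^*(L^{\sig}(\fg),\fk)\simeq H^*(\cx_{\sig})$ from [K1]) or a comparison of Poincar\'e series to conclude a homotopy equivalence. A cohomology isomorphism, or equality of Betti numbers, does not by itself yield a homotopy equivalence; you would need an actual map between the spaces inducing the isomorphism (plus simple connectivity and Whitehead's theorem, or a cell-by-cell argument as in [GR]). As written, this part is circular with respect to what the theorem asserts. The correct route is to construct the comparison map directly --- the inclusion of the algebraic twisted loop group modulo $K_o$ into $\Om^{\sig}(G_o)/K_o$ --- and prove it is a homotopy equivalence by the twisted analogue of the [GR] decomposition, then deduce the statement for $\cx_{\sig}$ from the identification of $\cx_{\sig}$ with that quotient.
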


 For any
invariant homogeneous polynomial $P\in S^{d+1}(\fg^*)^{\fg}$ of degree $d+1$ ($d\geq 1$), define the map
  \[
\phi_P: \wed^{2d}_{\bc}(L(\fg )) \to \bc
  \]
by
  \[
\phi_P(v_0\wed v_1\wed \cdots \wed v_{2d-1}) = \frac{1}{\pi i} \int^{\pi}_{\theta =0} \Phi_P (v_0\wed v_1\wed\cdots\wed v_{2d-1}),
  \]
where $\Phi_P: \wed_{\bc}^{2d} (L(\fg )) \to\Om^1$ is the map defined by
  \begin{multline*}
\Phi_P\bigl( v_0\wed v_1\wed\cdots\wed v_{2d-1}\bigr) :=
\sum_{\mu\in S_{2d}}\eps (\mu )\, P\Bigl( v_{\mu (0)}, \bigl[ v_{\mu (1)},
v_{\mu (2)}\bigr], \\
\dots , \bigl[ v_{\mu (2d-3)}, v_{\mu (2d-2)}\bigr] ,
dv_{\mu (2d-1)}\Bigr) .
   \end{multline*}
Here $\Om^1$ is the space of algebraic 1-forms on $\bc^*$,
$d(x\otimes a(t)) = x\otimes a'(t)dt$ (for $x\in\fg$ and $a(t)\in\bc
[t,t^{-1}]$) and in the  integral $\int^{\pi}_{\theta =o}$
we make the substitution $t=e^{i\theta}$.

Let $\pi_{\fk}: \fg\to\fk$ be the projection under the decomposition
$\fg = \fk\oplus\fp$.  We similarly define $\pi_{\fp}$.  Define the
$\fk$-invariant map (for any $P\in S^{d+1}(\fg^*)^{\fg}$)
  \[
\hat{\phi}_P: \wed^{2d}_{\bc}\bigl( L^{\sig}(\fg )/\fk\bigr) \to \bc
  \]
by
  \[
\hat{\phi}_P(\bar{v}_0\wed \cdots \wed\bar{v}_{2d-1}) =
\phi_P(v^o_0\wed\cdots\wed v^o_{2d-1}),
  \]
where $\bar{v}_i := v_i+\fk\in L^{\sig}(\fg )/\fk$ and $v^o_i :=
v_i-v_i(1)$.  Then, $\hat{\phi}_P$ can be viewed as a
cochain for the Lie algebra pair $(L^{\sig}(\fg ), \fk )$.

  \begin{lemma}  Let $P\in S^{d+1}(\fg^*)^{\fg}$.  Then, for the
differential $\del$ in the standard cochain complex of the pair
$(L^{\sig}(\fg ), \fk )$, $\del\hat{\phi}_P$ descends to a cocycle for
the Lie algebra pair $(\fg ,\fk )$ under the evaluation map
$L^{\sig}(\fg )\to\fg$ at 1.
  \end{lemma}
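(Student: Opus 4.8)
The plan is to reduce the assertion to a formal statement about $ev_1^{*}$ and then to produce the descended cochain explicitly. Since $\hat{\phi}_P$ is $\fk$-invariant and the Chevalley--Eilenberg differential preserves the relative subcomplex, $\del\hat{\phi}_P$ is automatically a $\fk$-invariant relative cochain for $(L^{\sig}(\fg),\fk)$; and, as observed above, $\del\equiv 0$ on $C^{*}(\fg,\fk)$, so \emph{every} relative cochain for $(\fg,\fk)$ is a cocycle. Thus the entire content of the lemma is that $\del\hat{\phi}_P$ lies in the image of $ev_1^{*}$. As $ev_1\colon L^{\sig}(\fg)\to\fg$ is surjective with kernel $L^{\sig}_1(\fg)$ and $L^{\sig}_1(\fg)\cap\fk=0$, this amounts to the vanishing of $(\del\hat{\phi}_P)(w_0\wed\cdots\wed w_{2d})$ whenever some $w_i$ lies in $L^{\sig}_1(\fg)$.

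I would write $\hat{\phi}_P=q^{*}\phi_P$, where $q\colon L^{\sig}(\fg)\to L(\fg)$ is the linear map $q(v)=v-v(1)$ with $v(1)$ read as a constant loop; note $q(\fk)=0$, which is why $q^{*}\phi_P$ descends to $L^{\sig}(\fg)/\fk$, and recall $\phi_P$ vanishes on constant loops since $d(\mathrm{const})=0$. Because $q$ is not a Lie algebra homomorphism, $\del(q^{*}\phi_P)\neq q^{*}(\del\phi_P)$; using the identity $q([v,v'])-[q(v),q(v')]=[q(v),v'(1)]+[v(1),q(v')]$, a short computation gives
\[
\del\hat{\phi}_P(w_0\wed\cdots\wed w_{2d})=(\del\phi_P)\bigl(q w_0\wed\cdots\wed q w_{2d}\bigr)+R,
\]
where $R=\sum_{i<j}(-1)^{i+j}\phi_P\bigl(([q w_i,w_j(1)]+[w_i(1),q w_j])\wed q w_0\wed\cdots\wed q w_{2d}\bigr)$, the factors $q w_i,q w_j$ being omitted from the trailing wedge. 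I claim $R=0$ identically: pairing the summands of $R$ and invoking the $\operatorname{ad}(\fg)$-invariance of $\phi_P$ — the identity $\sum_m\phi_P(\cdots\wed[a,u_m]\wed\cdots)=0$ for a constant $a\in\fg$, which is a consequence of the invariance identities for $P$ — together with the alternating property of $\phi_P$, one checks each term cancels against another; for $d=1$ this is an elementary direct computation and for general $d$ it is a bookkeeping argument of the same kind.

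Granting $R=0$, we have $\del\hat{\phi}_P(w_0\wed\cdots\wed w_{2d})=(\del\phi_P)(q w_0\wed\cdots\wed q w_{2d})$, and it remains to understand $\del\phi_P$ on the full loop algebra $L(\fg)$. In the integral $\frac1{\pi i}\int_0^{\pi}\Phi_P$ one integrates by parts in $\theta$ (with $t=e^{i\theta}$): using the $\fg$-invariance of $P$, the Chevalley--Eilenberg differential of the $\Om^{1}$-valued transgression integrand $\Phi_P$ becomes a total $\theta$-derivative, so the integral collapses to its boundary values at $t=1$ and $t=-1$. Explicitly, $\del\phi_P$ equals, up to a nonzero constant $c$, the difference $ev_{-1}^{*}\tau_P-ev_1^{*}\tau_P$, where $ev_{\pm1}$ is evaluation at $t=\pm1$ and $\tau_P\in C^{2d+1}(\fg)$ is the standard transgression cochain of $P$ (for $d=1$, $\tau_P(x_0\wed x_1\wed x_2)=\ip<[x_0,x_1],x_2>$). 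Evaluating at $v_k=q w_k$ and using $ev_1(q w_k)=0$ and $ev_{-1}(q w_k)=\sig(ev_1 w_k)-ev_1 w_k$, we get $\del\hat{\phi}_P=ev_1^{*}\Theta$, where $\Theta(x_0\wed\cdots\wed x_{2d})=c\,\tau_P\bigl((\sig x_0-x_0)\wed\cdots\wed(\sig x_{2d}-x_{2d})\bigr)$. This $\Theta$ vanishes whenever some $x_i\in\fk$ (as then $\sig x_i-x_i=0$) and restricts on $\wed\fp$ to $(-2)^{2d+1}c\,\tau_P|_{\wed\fp}$; hence $\Theta$ is a relative cochain for $(\fg,\fk)$, automatically closed, and the lemma follows.

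The step I expect to be the main obstacle is the endpoint analysis of $\del\phi_P$ in arbitrary degree: converting the Chevalley--Eilenberg differential of $\Phi_P$ into a total $\theta$-derivative requires manipulating the permutation sum over $S_{2d}$ against the invariance identities for $P$, and carefully tracking the signs produced by the integration by parts. By contrast, the cancellation $R=0$ is routine once the right pairing of terms is found, and the reduction in the first paragraph is purely formal.
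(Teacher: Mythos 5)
Your proposal is correct and follows essentially the same route as the paper: your comparison $\del\hat{\phi}_P(w_0\wed\cdots\wed w_{2d})=(\del\phi_P)(qw_0\wed\cdots\wed qw_{2d})$ with vanishing error term $R$ is exactly the paper's identity (2), obtained there from $[x,y]^o-[x^o,y^o]=[x(1),y^o]+[x^o,y(1)]$ and the $\fg$-invariance of $\phi_P$, and your explicit descended cocycle $\Theta$ is just a more explicit form of the paper's final endpoint evaluation (which checks vanishing when $v_0(1)=0$, using $v_0(-1)=\sig(v_0(1))=0$). The step you flag as the main obstacle --- that $\del\phi_P$ collapses to boundary values of the transgression cochain at $t=\pm 1$ --- is precisely the commutation $d\circ\beta_P=z^{-1}\Phi_P\part$ which the paper quotes from [FT] rather than reproving, so nothing essential is missing from your outline.
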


  \begin{proof}
Observe first that, by [FT], the following diagram is commutative up
to a nonzero scalar multiple (i.e., $d\circ \beta_P = z^{-1}\Phi_P\part$,
for some $z\in\bc^*$).
  \[  \begin{CD}
\wed^{2d+1}_{\bc}(L(\fg )) @>{\beta_P}>> \Om^0 \\
@VV{\part}V @VV{d}V \\
\wed^{2d}_{\bc}(L(\fg )) @>>{\Phi_P}> \Om^1 ,
  \end{CD}  \]
where
  \begin{multline*}
\beta_P(v_0\wed\cdots\wed v_{2d}) := \sum_{\mu\in S_{2d+1}}
\eps (\mu
)\, P\Bigl( v_{\mu (0)}, \bigl[ v_{\mu (1)}, v_{\mu (2)}\bigr] ,\\
\dots , \bigl[ v_{\mu (2d-1)}, v_{\mu (2d)}\bigr]\Bigr) ,
  \end{multline*}
$\Om^0$ is the space of algebraic functions on $\bc^*$, $d$ is the
standard deRham differential, and $\part$ is the standard
differential in the chain complex of the Lie algebra $L(\fg
)$.  Thus, for $v_i\in L(\fg )$,
  \begin{align}
(\del\phi_P)(v_0\wed v_1\wed \cdots\wed v_{2d}) &= \frac{1}{\pi
i}\int^{\pi}_{\theta =0} \Phi_P\bigl(\part (v_0\wed v_1\wed \cdots\wed
v_{2d})\bigr) \notag\\
&= \frac{z}{\pi i}\int^{\pi}_{\theta =0}  d\bigl(\beta_P (v_0\wed
v_1\wed \cdots\wed v_{2d})\bigr) \notag\\
&= \frac{z}{\pi i}\bigl( \beta_P (v_0(-1)\wed\dots\wed v_{2d}(-1)) \notag\\
&\qquad\qquad -\beta_P (v_0(1)\wed\cdots\wed v_{2d}(1))\bigr).
  \end{align}
We next show that for any $v_0,\dots ,v_{2d}\in L^{\sig}(\fg )$,
  \beqn
(\del \hat{\phi}_P) (\bar{v}_0\wed\cdots\wed \bar{v}_{2d}) = (\del
\phi_P) (v^o_0\wed\cdots\wed v^o_{2d}),
  \eeqn
where $\bar{v}_i$ and $v^o_i$ are defined above  the statement of
this lemma. For any $x,y\in L(\fg )$,
  \beqn
[x,y]^o - [x^o,y^o] = [x(1), y^o] + [x^o, y(1)].
  \eeqn
  Thus,
  \begin{align*}
(\del\hat{\phi}_P) &(\bar{v}_0\wed \cdots\wed\bar{v}_{2d}) - \del\phi_P (v^o_0\wed
\cdots\wed v^o_{2d})\\
&= \sum_{i<j}(-1)^{i+j}\, \phi_P\Bigl( \bigl( [v_i,v_j]^o - [v^o_i,v^o_j]\bigr)\wed v^o_0\wed\cdots\wed \widehat{v^o_i}\wed\cdots\\
  &\qquad\qquad \wed \widehat{v^o_j}\wed \cdots\wed v^o_{2d}\Bigr)\\
&=  \sum_{i<j}(-1)^{i+j}\, \phi_P\Bigl( \bigl( [v_i(1),v^o_j] +
[v^o_i, v_j(1)] \bigr)\\
&\qquad\qquad \wed v^o_0\wed\cdots\wed
\widehat{v^o_i}\wed\cdots \wed \widehat{v^o_j}\wed \cdots\wed v^o_{2d}\Bigr),
\text{  by (3)}\\
&= \sum_{i<j}(-1)^{i+j}\, \phi_P\bigl( [v_i(1), v^o_j]\wed
v^o_0\wed\cdots\wed \widehat{v^o_i}\wed\cdots \wed\widehat{v^o_j}\wed \cdots\wed v^o_{2d}\bigr) \\
&\qquad + \sum_{i>j}(-1)^{i+j}\, \phi_P\bigl( [v^o_j, v_i(1)]\wed
v^o_0\wed\cdots\wed \widehat{v^o_j}\wed\cdots \wed\widehat{v^o_i}\wed \cdots\wed v^o_{2d}\bigr)\\
&= \sum_i (-1)^i \bigl(v_i(1)\cdot \phi_P\bigr)
\bigl( v^o_0\wed \cdots\wed\widehat{v^o_i}\wed\cdots\wed v^o_{2d}\bigr)\\
&= 0, \qquad\text{ since $\phi_P$ is $\fg$-invariant.}
   \end{align*}
This proves (2).

In particular, for any $v_0\in L^{\sig}(\fg )$ such that $v_0(1)=0$ and
$v_1, \dots, v_{2d} \in L^{\sig}(\fg )$, we get
  \begin{align*}
 \del \hat{\phi}_P\bigl( \bar{v}_0\wed\cdots\wed\bar{v}_{2d}\bigr) &=
\del\phi_P\bigl( v_0\wed v^o_1\wed\cdots\wed v^o_{2d}\bigr)\\
&=\frac{z}{\pi i}\beta_P\bigl( v_0(-1)\wed v^o_1(-1)\wed\cdots\wed v^o_{2d}(-1)\bigr),\,\,
\text{since}\, v_0(1)=0\\
&= 0, \,\,
\text{since}\, v_0(-1)=\sigma(v_0(1))=0.
    \end{align*}
This proves the lemma.
 \end{proof}

By Identity (1) of the above lemma, the restriction $\bar{\phi}_P$ of $\phi_P$
to $\wed_{\bc}^{2d}(L_1^{\sig}(\fg ))$ is a cocycle (for the Lie algebra $L_1^{\sig}(\fg )$).

As is well known, $S(\fg^*)^{\fg}$ is freely generated by certain homogeneous
polynomials $P_1,\dots ,P_{\ell_\fg}$ of degrees $m_1+1, m_2+1, \dots ,m_{\ell_\fg}+1$ respectively,
where $\ell_\fg$ is the rank of $\fg$ and $m_1=1 < m_2\leq \cdots \leq m_{\ell_\fg}$
 are the exponents of $\fg$.

The following result is obtained by combining [PS, Proposition 4.11.3] and Theorems 4.3 and 4.4.

  \begin{theorem}  The cohomology classes $[\bar{\phi}_{P_1}], \dots ,[\bar{\phi}_{P_{\ell_\fg}}] \in H^*(L_1^{\sig}(\fg ))$ freely generate the algebra
  \[
H^*\bigl( L_1^{\sig}(\fg )\bigr) \simeq H^*\bigl( L_1^{\sig}(G_o)\bigr)
\simeq H^*\bigl(\Om_1^{\sig}(G_o)\bigr) .
  \]
  \end{theorem}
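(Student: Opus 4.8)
The plan is to transport the statement to the ordinary based loop space of $G_o$, where $H^*$ is classically a polynomial algebra whose generators are the transgressions of the primitive generators of $H^*(G_o)$, and then to recognize the classes $[\bar{\phi}_{P_i}]$ as precisely those transgressions. First I would reduce to topology: by Theorem 4.3 the integration map gives an algebra isomorphism $H^*(L_1^{\sig}(\fg ))\simeq H^*(L_1^{\sig}(G_o))$, by Theorem 4.4 the inclusion $L_1^{\sig}(G_o)\hookrightarrow\Om_1^{\sig}(G_o)$ is a homotopy equivalence, and, as noted just before the statement, $\Om_1^{\sig}(G_o)$ is homeomorphic to the usual based loop space $\Om_1(G_o)$ via $f\mapsto\bar{f}_{|_{[0,\pi ]}}$ (this uses $\sig (1)=1$, so that $\bar{f}(\pi )=\sig (\bar{f}(0))=1$). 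Composing, $H^*(L_1^{\sig}(\fg ))\simeq H^*(\Om_1(G_o))$ as graded algebras, and the cocycles $\bar{\phi}_{P_i}$ --- being built from the integral $\frac{1}{\pi i}\int^{\pi}_{\theta =0}$, i.e.\ from the half-circle $\theta \in [0,\pi ]$ underlying this homeomorphism --- are compatible with all these identifications. So it suffices to show that the images of $[\bar{\phi}_{P_1}],\dots ,[\bar{\phi}_{P_{\ell_{\fg}}}]$ freely generate $H^*(\Om_1(G_o),\bc )$.

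Next I would invoke the classical structure of $H^*(\Om_1(G_o),\bc )$ (Bott; see also [PS, Proposition 4.11.3]): it is a polynomial algebra on homogeneous generators $y_1,\dots ,y_{\ell_{\fg}}$ with $\deg y_i=2m_i$, each $y_i$ being the transgression, in the path--loop fibration $\Om_1(G_o)\to PG_o\to G_o$ (with $PG_o$ the contractible space of paths issuing from $1$), of the primitive class in $H^{2m_i+1}(G_o,\bc )$ attached to the invariant polynomial $P_i$ by the Chern--Weil/Cartan construction. Since $H^*(\Om_1(G_o))$ is then concentrated in even degrees and is polynomial on $\ell_{\fg}$ generators in the degrees $2m_1,\dots ,2m_{\ell_{\fg}}$, and since the classes $[\bar{\phi}_{P_i}]$ lie in exactly these degrees, a Poincar\'e-series count reduces the theorem to proving that each $[\bar{\phi}_{P_i}]$ equals a nonzero scalar multiple of $y_i$ modulo decomposables; the $[\bar{\phi}_{P_i}]$ are then a polynomial generating set by a triangular change of generators.

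The remaining, genuinely substantive point is thus the identification $[\bar{\phi}_{P_i}]\equiv c_i\,y_i$ with $c_i\neq 0$. Here I would use the commuting square $d\circ\beta_{P}=z^{-1}\Phi_{P}\part$ of [FT] already exploited in Lemma 4.5: it says precisely that the $0$-cochain $\beta_{P_i}$ on $L(\fg )$ transgresses to $\phi_{P_i}=\frac{1}{\pi i}\int^{\pi}_{\theta =0}\Phi_{P_i}$. Under the Chevalley--Eilenberg / de Rham models of $PG_o$ and $\Om_1(G_o)$, the cochain $\beta_{P_i}$ represents a nonzero multiple of the primitive generator of $H^{2m_i+1}(G_o)$ determined by $P_i$, while the ``integrate one slot around the loop'' operation encoded in $\Phi_{P_i}$, followed by $\frac{1}{\pi i}\int^{\pi}_{\theta =0}$, is exactly the transgression map in $PG_o\to G_o$; hence $[\bar{\phi}_{P_i}]$ goes to a nonzero multiple of $y_i$, which finishes the proof. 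If [PS, Proposition 4.11.3] is already phrased in terms of these explicit cocycles, this paragraph is simply a reading of loc.\ cit.

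I expect this last step --- matching the combinatorially defined cocycle $\bar{\phi}_{P_i}$, rather than some decomposable class or zero, with the $i$-th transgressed generator --- to be the main obstacle, since it requires tracing $P_i$ simultaneously through Chern--Weil theory on $G_o$, through the transgression in the path--loop fibration, and through the half-circle reparametrization $\theta \mapsto e^{i\theta}$, $\theta \in [0,\pi ]$, underlying $\Om_1^{\sig}(G_o)\cong\Om_1(G_o)$; the role of the [FT] square and of Lemma 4.5 is to make this trace functorial at the cochain level. By contrast, the first two steps are routine bookkeeping with Theorems 4.3 and 4.4 and with classical loop-space cohomology.
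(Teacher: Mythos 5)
Your proposal is correct and follows essentially the same route as the paper: the paper proves this theorem simply by combining [PS, Proposition 4.11.3] with Theorems 4.3 and 4.4 (together with the identification $\Om_1^{\sig}(G_o)\cong \Om_1(G_o)$ via restriction to the half circle), exactly the reduction you make. Your additional transgression argument via the [FT] square is just an unpacking of the content of [PS, Proposition 4.11.3], which the paper leaves as a citation.
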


Define the differential graded algebra (for short DGA)
  \[
\cd = H^*(L_1^{\sig}(\fg )) \otimes C^*(\fg ,\fk )
  \]
under the graded tensor product algebra structure.  We define the differential
$d$ in $\cd$ as follows:  Take $d_{|_{C^*(\fg ,\fk )}}$ as
the standard differential $\del$ of the cochain complex $C^*(\fg ,\fk )$ of
the Lie algebra pair $(\fg ,\fk )$ and $d([\bar{\phi}_{P_i}]) = \del\hat{\phi}_{P_i}$
(cf. Lemma 4.5).  There is a differential graded algebra homomorphism
$\mu : \cd\to C^*(L^{\sig}(\fg ),\fk )$ defined by
  \[
\mu\bigl( [\bar{\phi}_{P_i}]\bigr) =\hat{\phi}_{P_i}
  \]
 and $\mu_{|_{C^*(\fg ,\fk )}}$ is the canonical inclusion
$j: \, C^*(\fg ,\fk )\subset C^*(L^{\sig}(\fg ),\fk )$ under the evaluation
map at 1.

Applying the Hirsch lemma to the fibration
\[\Om^{\sig}_1(G_o)
\to \Om^{\sig}(G_o)/K_o\overset{\gamma}{\longrightarrow} G_o/K_o,\] (cf.
[DGMS, Lemma 3.1]), and using Theorems 4.3, 4.4 and 4.6,
we get the following.

  \begin{theorem}  The map $\mu$ induces a graded algebra isomorphism in
cohomology
  \[
[\mu ]: H^*(\cd ) \simto H^*(\cx_{\sig}).
  \]
In particular, by Corollary 4.2, any cohomology class $[x]\in H^*(\cx_{\sig})$ can be
represented by a cocycle $x\in C^*(L^{\sig}(\fg ),\fk )$ of the form
  \[
x = \sum_{\fri =(i_1,\dots ,i_{\ell_\fg})\in\bz^{\ell_\fg}_+} j\bigl(
c(Q_{\fri})\bigr)
(\hat{\phi}_{P_1})^{i_1}\cdots (\hat{\phi}_{P_{\ell_\fg}})^{i_{\ell_\fg}} ,
  \]
for some $Q_{\fri}\in S(\fk^*)^{\fk}$, where $c: S(\fk^*)^{\fk}\to C(\fg ,
\fk )$ is the Chern-Weil homomorphism defined in the beginning of this section.
  \end{theorem}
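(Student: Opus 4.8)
The plan is to apply the Hirsch lemma (in the form of [DGMS, Lemma 3.1]) to the fibration $\Om^{\sig}_1(G_o)\to\Om^{\sig}(G_o)/K_o\xrightarrow{\gamma}G_o/K_o$, which reduces the computation of $H^*(\Om^{\sig}(G_o)/K_o)\cong H^*(\cx_{\sig})$ (by Theorem 4.4) to the cohomology of a Koszul-type model built from the fibre cohomology and the base. First I would recall that by Theorem 4.6 the fibre cohomology $H^*(\Om_1^{\sig}(G_o))\cong H^*(L_1^{\sig}(\fg))$ is a free graded-commutative algebra on the classes $[\bar\phi_{P_1}],\dots,[\bar\phi_{P_{\ell_\fg}}]$, and that $H^*(G_o/K_o)\cong C^*(\fg,\fk)$ since the differential on $C^*(\fg,\fk)$ vanishes. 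The Hirsch lemma then produces a minimal (or at least quasi-isomorphic) model for the total space of the form $H^*(\text{fibre})\otimes H^*(\text{base})$ with a differential that is determined by its value on the fibre generators; by construction this differential sends $[\bar\phi_{P_i}]$ to a class pulled back from the base, i.e.\ to an element of $C^*(\fg,\fk)\subset C^*(L^{\sig}(\fg),\fk)$. This is precisely the DGA $\cd=H^*(L_1^{\sig}(\fg))\otimes C^*(\fg,\fk)$ with the differential $d$ defined just before the theorem, where $d([\bar\phi_{P_i}])=\del\hat\phi_{P_i}$ (Lemma 4.5 guarantees $\del\hat\phi_{P_i}$ descends to a cocycle on $(\fg,\fk)$, so lands in $C^*(\fg,\fk)$, making $d$ well-defined).

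The key step is to check that the map $\mu:\cd\to C^*(L^{\sig}(\fg),\fk)$ is a DGA homomorphism and a quasi-isomorphism. That $\mu$ is a chain map is immediate on the $C^*(\fg,\fk)$-factor (it is the inclusion $j$ induced by evaluation at $1$) and on the fibre generators by the very definition $d([\bar\phi_{P_i}])=\del\hat\phi_{P_i}=\del(\mu[\bar\phi_{P_i}])$; multiplicativity follows since $\mu$ is built as an algebra map out of these two pieces. For the quasi-isomorphism, I would invoke the Eilenberg–Moore / Hirsch comparison: both $H^*(\cd)$ and $H^*(C^*(L^{\sig}(\fg),\fk))\cong H^*(L^{\sig}(\fg),\fk)\cong H^*(\cx_{\sig})$ (Theorem 4.3) are computed by spectral sequences associated to the $t$-filtration (or the fibration filtration), and $\mu$ induces a morphism of these spectral sequences which on $E_2$ is the identity $H^*(\text{fibre})\otimes H^*(\text{base})$ — here one uses Theorems 4.4 and 4.6 to identify the $E_2$-pages, together with the fact that the fibration $\Om_1^{\sig}(G_o)\hookrightarrow L_1^{\sig}(G_o)$ and $\cx_{\sig}\simeq\Om^{\sig}(G_o)/K_o$ are homotopy equivalences. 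An isomorphism on $E_2$ forces an isomorphism on $E_\infty$, hence $[\mu]$ is an isomorphism in cohomology.

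The main obstacle, and the point requiring the most care, is the well-definedness and chain-map property of $d$ on $\cd$ together with the verification that the Hirsch-lemma model really is $\cd$ rather than some larger Koszul complex: a priori the Hirsch lemma gives a model $H^*(\text{fibre})\otimes B$ for any DGA model $B$ of the base, and one must use that $C^*(\fg,\fk)$ with zero differential is itself a (minimal) model of $G_o/K_o$ — this is where the involution hypothesis enters through $\del\equiv 0$ on $C^*(\fg,\fk)$. Once that identification is in place, the ``in particular'' clause is a formality: every class in $H^*(\cx_{\sig})$ lifts to a cocycle in $\cd$, which as an element of $H^*(L_1^{\sig}(\fg))\otimes C^*(\fg,\fk)$ is a polynomial $\sum_{\fri}Q_{\fri}\otimes(\bar\phi_{P_1})^{i_1}\cdots(\bar\phi_{P_{\ell_\fg}})^{i_{\ell_\fg}}$ with $Q_{\fri}\in C(\fg,\fk)$; applying $\mu$ and using $\mu|_{C^*(\fg,\fk)}=j$ and $C(\fg,\fk)\cong[\wed(\fp)^*]^\fk=\Imo c$ (plus Corollary 4.2, which kills the odd part $V$ under $\gamma^*$ so that only $\Imo c$ survives) yields exactly the stated form $x=\sum_{\fri}j(c(Q_{\fri}))(\hat\phi_{P_1})^{i_1}\cdots(\hat\phi_{P_{\ell_\fg}})^{i_{\ell_\fg}}$.
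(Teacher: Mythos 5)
Your proposal follows essentially the same route as the paper: its proof consists precisely of applying the Hirsch lemma [DGMS, Lemma 3.1] to the fibration $\Om^{\sig}_1(G_o)\to\Om^{\sig}(G_o)/K_o\overset{\gamma}\to G_o/K_o$ together with Theorems 4.3, 4.4 and 4.6 (with Lemma 4.5 making the differential of $\cd$ well defined), and then deducing the stated cocycle representatives from Corollary 4.2, exactly as you do. One small slip to correct: $C(\fg,\fk)\cong[\wed(\fp)^*]^{\fk}$ is $\wed(V)\otimes\Imo c$, not $\Imo c$ itself; but since you immediately invoke Corollary 4.2 to discard the $V$-part, your argument coincides with the paper's intended one.
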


Finally, we are ready to prove the second main theorem of this paper.

  \begin{theorem}  Let $\fg$ be a simple Lie algebra and let $\sig$ be an
involution of $\fg$ with $+1$ (resp. $-1$) eigenspace $\fk$ (resp. $\fp$).
Assume that $\fp$ is an irreducible $\fk$-module.  Then, the algebra
$A^{\fk}$ of $\fk$-invariants of $A$ is generated (as an algebra) by the element $S$,
where $A$ and $S$ are defined in the Introduction.

In particular, $(A^{\fk})^{p,q}=0$ if $p\neq q$.
  \end{theorem}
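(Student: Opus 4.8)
The plan is to combine the topological identification of $B^{\fk}$ with $H^*(\cy)$ from Theorem 3.1, the surjection $H^*(\cx_{\sig})\twoheadrightarrow H^*(\cy)$ coming from the restriction map, and the explicit generators of $H^*(\cx_{\sig})$ furnished by Theorem 4.7. Recall that $A = B/\langle C_3\rangle$, so $A^{\fk}$ is a quotient of $B^{\fk}$; under the identification $B^{\fk}\simeq H^*(\cy)$ we get a canonical surjection $H^*(\cy)\twoheadrightarrow A^{\fk}$, hence by composing a surjection $\pi: H^*(\cx_{\sig})\twoheadrightarrow A^{\fk}$. By Theorem 4.7 together with Corollary 4.2, $H^*(\cx_{\sig})$ is generated as an algebra by the classes $j(c(Q))$ for $Q\in S(\fk^*)^{\fk}$ and by the classes $[\hat{\phi}_{P_1}],\dots,[\hat{\phi}_{P_{\ell_\fg}}]$. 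It therefore suffices to understand the images of these generators under $\pi$.

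First I would pin down where $S$ lives: $S=\sum e_i\otimes f_i$ has bidegree $(1,1)$, so total degree $2$, matching $H^2(\cx_{\sig})$. Since $\fp$ is irreducible, $\fk$ is either simple or has a one-dimensional center; in all cases $(S(\fk^*)^{\fk})^{(2)}$ is at most one-dimensional (it is spanned by the Killing form of $\fk$ when $\fk$ is semisimple, and by the square of a degree-one generator when $\fk$ has a center — one needs a short case analysis using the classification of irreducible symmetric spaces, or better, the remark that $(B^{\fk})^{1,1}$ is already at most one-dimensional by multiplicity-freeness in Theorem 3.3). So the degree-$2$ part of $H^*(\cx_{\sig})$ is spanned by a single Chern--Weil class, and I would show this class maps to a nonzero multiple of $S$ under $\pi$ — equivalently, that $S\neq 0$ in $A^{\fk}$, which follows because the abelian subspace corresponding to the minuscule element of length one gives a nonzero class surviving to $A$ (using Theorem 3.4 and the fact that $\langle C_3\rangle$ is generated in bidegrees $(2,0),(0,2),(1,1)$, so it cannot kill the whole of $(B^{\fk})^{1,1}$).

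Next I would show every other algebra generator of $H^*(\cx_{\sig})$ dies under $\pi$. The classes $[\hat{\phi}_{P_i}]$ have odd total degree $2m_i+1$, while $A^{\fk}$ is bigraded with pieces in bidegrees $(p,q)$, so any class of odd total cohomological degree — i.e., $p+q$ odd — must map to zero once we know $(A^{\fk})^{p,q}=0$ for $p+q$ odd; but rather than assume that, I would argue directly that the image of $A^{\fk}$ inside $H^*(\cy)$ sits in the subalgebra generated by $S$ together with the observation that $\Imo(\gam^*)=\gam^*(\Imo c)$ (Corollary 4.2) kills the odd generators $V$ already at the level of $H^*(\Om^{\sig}(G_o)/K_o)\simeq H^*(\cx_{\sig})$. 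For the remaining even Chern--Weil generators $j(c(Q))$ with $Q$ of degree $>2$, I would use the nontriviality of the fibration $\Om^{\sig}_1(G_o)\to \Om^{\sig}(G_o)/K_o\to G_o/K_o$: the transgression forces $d([\bar\phi_{P_i}]) = \delta\hat\phi_{P_i}$ in the DGA $\cd$ to be a nonzero Chern--Weil element (this is exactly the content of the Hirsch-lemma computation behind Theorem 4.7), so in $H^*(\cd)\simeq H^*(\cx_{\sig})$ the higher Chern--Weil classes $c(Q)$ become either zero or expressible via products of lower-degree ones; pushing this through $\pi$ and inducting on degree, the only surviving algebra generator is the degree-$2$ one, i.e.\ $S$.

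The main obstacle will be the third step — controlling the images of the higher-degree Chern--Weil generators $j(c(Q))$ for $Q\in S(\fk^*)^{\fk}$ of degree $\geq 4$. In the diagonal case (handled in [K3]) one knows $H^*(G_o/K_o)=H^*(G_o)$ and the transgression pattern is classical; here $G_o/K_o$ is a genuine symmetric space and $\Imo c$ can be large, so one must verify that every $c(Q)$ with $\deg Q>2$ either transgresses (so maps to $0$ in $H^*(\cx_{\sig})$, hence in $A^{\fk}$) or, if it survives to $H^*(\cx_{\sig})$, nonetheless maps to a polynomial in $S$ in $A^{\fk}$ — and in fact to $0$ unless it is a power of the degree-$2$ class, which forces it to have already been a power of that class in $H^*(\cx_{\sig})$. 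I expect this to require a careful bookkeeping of which invariant polynomials are primitive and which Chern--Weil classes are transgressive, combined with the self-duality and multiplicity-freeness statements from the proof of Theorem 3.1 to rule out any extra invariants in $A^{\fk}$ in bidegree $(p,p)$ beyond $S^p$. Once that is in place, the final assertion $(A^{\fk})^{p,q}=0$ for $p\neq q$ is immediate, since $A^{\fk}$ is then spanned by powers of $S$, each of which has bidegree $(p,p)$.
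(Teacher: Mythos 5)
Your skeleton agrees with the paper's (the composite $\fy=\eta\circ\xi\circ a^*:H^*(\cx_{\sig})\twoheadrightarrow H^*(\cy)\simeq B^{\fk}\to A^{\fk}$, plus the generator description of Theorem 4.7), but the two steps where the actual work happens are wrong or missing. First, the loop classes $[\hat{\phi}_{P_i}]$ are \emph{not} odd: $P_i\in S^{m_i+1}(\fg^*)^{\fg}$ yields a $2m_i$-cochain, so these classes sit in even degree $2m_i$ (they generate $H^*(\Om_1^{\sig}(G_o))$, which is concentrated in even degrees). Hence neither a parity argument nor Corollary 4.2 can dispose of them: Corollary 4.2 only kills the odd subspace $V\subset H^*(G_o/K_o)$ pulled back by $\gam^*$, and that fact is already built into the statement of Theorem 4.7; the classes $\hat{\phi}_{P_i}$ are genuinely nonzero in $H^*(\cx_{\sig})$ (which is certainly not generated in degree $2$), and they die only after passing to $A^{\fk}$. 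The paper proves this by the explicit cochain computation with $\hat{\phi}_{P,\mu}$ and $F_{k'}(n,m)$ (following [K3, Theorem 2.8]), showing that the restriction of $\hat{\phi}_{P_i}$, $i\geq 2$, to two odd slots is a combination of elements $[e'_{k'},e_j]\otimes f_j$, i.e.\ lies in $C_3$, hence vanishes in $A$. That computation is the heart of the proof and has no counterpart in your proposal.

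Second, your handling of the Chern--Weil generators is inverted and, as you concede, incomplete. In the paper \emph{every} $j(c(Q))$ with $Q$ of positive degree (not just $\deg Q>2$) is killed by $\fy$ at once, directly from the definition $(\bar{c}f)(x\wedge y)=f([x,y])$: such cocycles are built from the bracket $\wed^2\fp\to\fk$ and therefore land in the ideal generated by $C_1\oplus C_2\oplus C_3$ once one projects to $A$. No transgression bookkeeping in $H^*(\cx_{\sig})$ is needed, nor could it succeed, since these classes need not vanish in $H^*(\cx_{\sig})$ at all --- the vanishing happens only in $A^{\fk}$. In particular the sole surviving generator is not ``a single Chern--Weil class in degree 2'' but $\hat{\phi}_{P_1}$, attached to the quadratic invariant of $\fg$, whose image is $S$. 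So the step you treat as the main obstacle is actually immediate in the paper, while the step you dismiss by a (false) parity argument is where the real argument lies; as written, the proposal has a genuine gap. (Minor point: the final assertion $(A^{\fk})^{p,q}=0$ for $p\neq q$ also follows directly from the proof of Theorem 3.1, where $B^{\fk}$ is shown to be concentrated in bidegrees $(p,p)$.)
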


  \begin{proof}  By Theorem 3.1, the algebra $B^{\fk}$ is graded isomorphic
  with the singular cohomology $H^*(\cy )$, where
  $B := \frac{\wed (\fp )\otimes\wed (\fp )}{\ip<C_1\oplus C_2>}$.  Moreover, the
  inclusion $a: \cy\subset \cx_{\sig}$ induces a surjection in cohomology,
  since $\cx_{\sig}$ is obtained from $\cy$ by attaching real even-dimensional cells
  (by virtue of the Bruhat decomposition).  Thus, we have
  \[
H^*(\cx_{\sig}) \overset{a^*}{\twoheadrightarrow} H^*(\cy ) \overset{\xi}{\simto} B^{\fk }
\overset{\eta}{\longrightarrow} A^{\fk},
  \]
where $\eta: B^{\fk}\to A^{\fk}$ is the standard quotient map.

By Theorem 4.7, any cohomology class $[x]\in H^*(\cx_{\sig})$ can be
represented by a cocycle $x\in C^*(L^{\sig}(\fg ),\fk )$ of the form
  \[
x=\sum_{\fri =(i_1,\dots ,i_{\ell_\fg})\in\bz^{\ell_\fg}_+} j\bigl(
c(Q_{\fri})\bigr)
(\hat{\phi}_{P_1})^{i_1}\cdots (\hat{\phi}_{P_{\ell_\fg}})^{i_{\ell_\fg}},
  \]
for some $Q_{\fri}\in S(\fk^*)^{\fk}$.

If $Q_{\fri}$ has constant term 0, from the definition of the Chern-Weil
homomorphism $c$, it is clear that under the composite map $\fy :=
\eta\circ\xi\circ a^*$, $j(c(Q_{\fri}))$ goes to zero.  Further, by an argument
similar to the proof
of Theorem 2.8 in [K3], we
see that $\hat{\phi}_{P_i}$ goes to zero under $\fy$ for any $2\leq i\leq\ell_\fg$.
We briefly recall the main argument here.

For any $\mu\in S_{2d}$ and $P\in S^{d+1}(\fg^*)^\fg$ ($d\geq 2$), consider the
linear form
  \[
\hat{\phi}_{P,\mu}: \otimes^{2d}_\bc \bigl(L^\sigma(\fg)/\fk\bigr) \to \bc ,
  \]
defined by
 \begin{multline*} \hat{\phi}_{P,\mu} \Bigl( \bar{v}_0\otimes \bar{v}_1\otimes
\cdots\otimes \bar{v}_{2d-1}\Bigr)\\
  = \int^{\pi}_{\theta =0} P\Bigl( v^o_{\mu (0)}, \bigl[ v^o_{\mu
(1)}, v^o_{\mu (2)}\bigr] , \ldots, \bigl[ v^o_{\mu(2d-3)}, v^o_{\mu (2d-2)}
\bigr] , dv^o_{\mu (2d-1)}\Bigr),
  \end{multline*}
where $\bar{v}_i:=v_i+\fk$.
For the notational convenience, assume $\mu (1) < \mu (2)$.  For any fixed
$$v_0,v_1, \ldots, \hat{v}_{\mu (1)}, \ldots, \hat{v}_{\mu (2)},
\ldots, v_{2d-1} \in L^\sigma(\fg),$$
 consider the restriction
$\bar{\phi}_{P,\mu}$ of the function $\hat{\phi}_{P,\mu}$ to
$$\bar{v}_0\x \bar{v}_1\x
\cdots\x \oplus_{p\in \bz} \,\fg_{2p+1}\otimes t^{2p+1} \x\cdots\x \oplus_{p\in \bz}
 \,\fg_{2p+1}\otimes t^{2p+1}\x\cdots\x \bar{v}_{2d-1},$$
 where
the two copies of $\oplus_{p\in \bz}\, \fg_{2p+1}\otimes t^{2p+1}$ are placed in
the $\mu (1)$ and $\mu
(2)$-th slots.  Then, under the identification $\fg_p\otimes t^p\cong
(\fg_p\otimes t^p)^*$ induced from the bilinear form $\ip<\,\, , \,\,>,$
  \begin{align*}
\bar{\phi}_{P,\mu} &= \sum_{i,j,m,n} f_i(n) \otimes f_j(m)\,  \int^{\pi}_{\theta =0} P\Bigl(
v^o_{\mu (0)}, \bigl[ e_i(n)^o,e_j(m)^o\bigr] ,\bigl[ v^o_{\mu
(3)}, v^o_{\mu (4)}\bigr] ,\ldots,\\
&\hspace{1.8in}\bigl[ v^o_{\mu (2d-3)}, v^o_{\mu (2d-2)} \bigr] , dv^o_{\mu
(2d-1)}\Bigr) \\
  &= \sum_{i,j,m,n,k'} f_i(n)\otimes f_j(m)\,  \int^{\pi}_{\theta =0} P\bigl( -, \ip<[e_i,e_j],
e'_{k'}> F_{k'}{(n,m)}, -\bigr)\\
 &= \sum_{i,j,m,n,k'} \ip<e_i, [e_j,e'_{k'}]>\, f_i(n)\otimes f_j(m)\,  \int^{\pi}_{\theta =0}
 P\bigl(
-, F_{k'}{(n,m)}, -\bigr) \\
 &= \sum_{j,k',m,n} [e_j,e'_{k'}](n) \otimes f_j(m)\,  \int^{\pi}_{\theta =0}
 P\bigl( -, F_{k'}{(n,m)},
-\bigr)\\
 &= - \sum_{j,k',m,n} [e'_{k'},e_j](n) \otimes f_j(m)\,  \int^{\pi}_{\theta =0}
 P\bigl( -, F_{k'}{(n,m)},
-\bigr),
\end{align*}
 where, as in the Introduction, $\{e_i\}$ is a basis of $\fp$ and $\{f_i\}$ is the
 dual basis; $\{e'_{k'}\}$ is a basis of $\fk$ and $\{f'_{k'}\}$ is the
 dual basis; $m,n$ run over the odd integers and $F_{k'}(n,m):=f'_{k'}(n+m)-f'_{k'}(n)-f'_{k'}(m)+f'_{k'}$.

Thus, only the powers of $\hat{\phi}_{P_1}$ contribute to the image of
$\fy$.  This completes the proof of the theorem.
  \end{proof}

 \begin{remark}  It is likely that for the validity of Theorem 4.8 it is enough
to assume that $\fg$ is semisimple (not necessarily simple).  However, we must
assume that $\fp$ is $\fk$-irreducible under the adjoint action since
the second grade component $(A^2)^{\fk}$ has dimension at least equal to
the number of irreducible components of the $\fk$-module $\fp$.
  \end{remark}

 \newpage


\begin{thebibliography}{999}

\bibitem[BK]{BK}
P. Belkale and S. Kumar, Eigenvalue problem and a new
product in cohomology of flag varieties, {\em Inventiones Math.} {\bf 166} (2006), 185--228.


\bibitem[CDSW]{CDSW}
F. Cachazo, M.R. Douglas, N. Seiberg and  E. Witten, Chiral rings and
anomalies in supersymmetric gauge theory,  {\em J. High Energy Phys.} {\bf 12} (2002).


\bibitem[C]{C}
H. Cartan, La transgression dans un groupe de Lie et dans un espace fibr\'{e}
 principal, {\em Colloque de Topologie (Espaces Fibr\'{e}s)}, Bruxelles
 (1950), 15--27

\bibitem[CFP]{CFP}
P. Cellini, P.M. Frajria and P. Papi, Abelian subalgebras in $\bz_2$-graded
Lie algebras and affine Weyl groups, Preprint (2004).

\bibitem[DGMS]{DGMS}
P. Deligne, P. Griffiths, J. Morgan and D. Sullivan, Real homotopy theory
of K\"{a}hler manifolds, {\em Inventiones Math.} {\bf 29} (1975), 245--274.

\bibitem[FT]{FT}
B.L. Feigin and B.L. Tsygan, Additive $K$-theory and crystalline cohomology,
{\em Funktsional Anal. i Prilozhen} {\bf 19} (1985), 52--62.


\bibitem[GR]{GR}
H. Garland and M.S. Raghunathan, A Bruhat decomposition for the loop space of a
compact group: A new approach to results of Bott, {\em Proc. Natl. Acad. Sci. USA}
{\bf 72} (1975), 4716--4717.

\bibitem[H]{H}
G. Han, On the structure of a class of graded modules related to
symmetric pairs, {\em Algebra Colloquium} {\bf 13} (2006), 315--328.


\bibitem[He]{He}
S. Helgason, {\em Differential Geometry, Lie Groups, and Symmetric Spaces}, Academic
 Press (1978).

\bibitem[K1]{K1}
S. Kumar,  Rational homotopy theory of flag varieties associated to
Kac-Moody groups, in: {\em Infinite Dimensional Groups with Applications},
MSRI Publications vol. 4, Springer-Verlag (1985), 233--273.

\bibitem[K2]{K2}
S. Kumar,  {\em Kac-Moody Groups, their Flag Varieties and Representation
Theory}, Progress in Math. vol. 204, Birkh\"{a}user (2002).

\bibitem[K3]{K3}
S. Kumar, On the Cachazo-Douglas-Seiberg-Witten conjecture for simple Lie
algebras, {\em J. AMS} {\bf 21} (2008), 797--808.

\bibitem[P]{P}
D.I. Panyushev, Isotropy representations, eigenvalues of a Casimir element, and
commutative Lie subalgebras, {\em J. London Math. Soc. (2)} {\bf 64} (2001),
61--80.


\bibitem[PS]{PS}
A. Pressley and G. Segal, {\em Loop Groups}, Clarendon Press, Oxford (1992).

  \end{thebibliography}
  \end{document}